\numberwithin{equation}{section}
\newcommand{\diag}{\mathrm{diag}}
\newcommand{\argmin}{\operatornamewithlimits{argmin}}
\newcommand{\trace}{\mathrm{trace}}
\newcommand{\Det}{{\rm Det}}
\newtheorem{theorem}{Theorem}
\newtheorem{definition}{Definition}
\newtheorem{lemma}{Lemma}
\newtheorem{remark}{Remark}
\newtheorem{property}{Property}
\providecommand{\keywords}[1]
{
  \small	
  \textbf{\textit{Key words---}} #1
}
\title{$n$-Dimensional Volumetric Stretch Energy Minimization for Volume-/Mass-Preserving Parameterizations}
\author{Zhong-Heng Tan, Tiexiang Li, Wen-Wei Lin, Shing-Tung Yau}
\date{}
\begin{document}
\maketitle

\begin{abstract}
    In this paper, we develop an $n$ dimensional volumetric stretch energy ($n$-VSE) functional for the volume-/mass-preserving parameterization of the $n$-manifolds topologically equivalent to $n$-ball. The $n$-VSE has a lower bound and equal to it if and only if the map is volume-/mass-preserving. This motivates us to minimize the $n$-VSE to achieve the ideal volume-/mass-preserving parameterization. In the discrete case, we also guarantee the relation between the lower bound and the volume-/mass-preservation, and propose the spherical and ball volume-/mass-preserving parameterization algorithms. The numerical experiments indicate the accuracy and robustness of the proposed algorithms. The modified algorithms are applied to the manifold registration and deformation, showing the versatility of $n$-VSE.
\end{abstract}

\keywords{$n$ dimensional volumetric stretch energy, volume-/mass-preserving map, $n$-manifold, spherical parameterization, ball parameterization}

\textbf{MSC(2020)}   49Q10, 52C26, 65D18, 65F05, 68U05

\section{Introduction}

In this paper, we consider $\mathcal{M}$ as an $n$-dimensional manifold which is topologically equivalent to an $n$-ball $\mathbb{B}^n = \{\mathbf{x}\in \mathbb{R}^n~|~\|\mathbf{x}\|_2^2 \leq 1\}$. An $n$-dimensional parameterization between $\mathcal{M}$ and $\mathbb{B}^n$ can be referred as to find a bijective map from $\mathcal{M}$ to $\mathbb{B}^n$. The mapping induces a canonical coordinate system on the $n$-manifold, which can be used to simplify the process issue arising from digital geometrical patterns, and medical image data. In particular, surface ($2$-manifold) and volume (3-manifold) parameterizations have been widely applied in computer graphics \cite{MGBS21}, image remeshing \cite{KSNL19,CPXG17}, morphing \cite{MHWW17}, registration \cite{KCLM14,RSWZ17,CPGP18} and texture mapping \cite{LBPS02,LMKC13}, etc. In recent years, several numerical algorithms from different approaches for the computation of surface \cite{AGGP21,PTLM15,SHSA00, RSKC17, MHWW19}, or volume \cite{KSWC17,MHTL19,XGFL16,MHTL20,JWTL23,YJGP15} parameterizations have been well-developed and proposed by various research groups. Most of parameterization algorithms using nongrid mapping method for surfaces and 3-manifolds are in view of the minimization of distortions of angle or area for 2-manifolds and of volume/mass for 3-manifolds. The corresponding parameterizations, called angle-preserving (conformal), area-preserving (authalic/equiareal) parameterizations, and volume-/mass-preserving parameterizations, are computed by minimizing the Dirichlet energy \cite{MHWW17,YCWW21}, stretch energy \cite{MHWW19} and volumetric stretch energy \cite{MHTL19,MHTL20,TMWH23}, respectively.

On the other hand, another more general and useful approach, namely, a unifying framework of $n$-dimensional quasi-conformal maps \cite{DPGP22} to minimize quasi-conformal distortion, volume distortion, landmark mismatch and to ensure the bijectivity and volume prior information for the computation of $n$-dimensional quasi-conformal mappings between $\mathcal{M}$ and $\mathbb{B}^n$. More precisely, based on the idea \cite{YTKC15}, the framework \cite{DPGP22} minimizes a generalized conformality distortion to measure the dilation of an $n$-dimensional quasi-conformal maps and satisfies the landmark constraints. Furthermore, the orientation constraint (determinant $>0$) is converted into an equality constraint by using the exponential function with adding a regularizer to minimize the volume change. The associated $n$-dimensional quasi-conformal maps can be solved by ADMM with $O(\frac{1}{k})$ convergence. The framework \cite{DPGP22} can be successfully applied to the adaptive remeshing, graphics and medical image registration.

Based on the techniques of volumetric stretch energy minimization developed in \cite{MHTL19,TMWH23}, in this paper, we first introduce the $n$-dimensional volumetric stretch energy functional on $\mathcal{M}$ and the associated volumetric stretch Laplacian matrix with modified cotangent weights. We propose the $n$-dimensional volumetric stretch energy minimization ($n$-VSEM) for the computation of an approximately $n$-ball volume-/mass-preserving parameterization from $\mathcal{M}$ to $\mathbb{B}^n$. The minimal volumetric stretch energy is shown to have the lower bound $\frac{\nu(\mathbb{B}^n)^2}{\mu(\mathcal{M})}$, where $\mu$ and $\nu$ are the measure defined on $\mathcal{M}$ and $\mathbb{B}^n$, respectively, and the minimizer forms a volume-/mass-preserving map if and only if the volumetric stretch energy attains this lower bound. This conclusion has been proved in \cite{TMWH23} when $n = 3$ as a special case with $\nu(\cdot) = |\cdot|$.
However, the proof used a too strong constraint $|f(\mathcal{M})| = \mu(\mathcal{M})$. In this paper, we loose the constraint and give the theorems in continuous and discrete cases. This relationship critically and successfully provides the setting for the modified volumetric stretch energy functional.
In fact, $n$-VSEM with constraints have some differences with the unifying framework for $n$-dimensional quasi-conformal mapping by (i) replacing two distortion and smooth balancing items by a volumetric stretch energy functional with modified cotangent weights; (ii) replacing the ADMM method by the fixed-point type iteration, for which according to numerical experiments, both methods have sublinear convergence rate of $O(\frac{1}{k})$;

The main contribution of this paper is four folds.

\begin{itemize}
    \item We propose a continuous $n$-dimensional volumetric stretch energy ($n$-VSE) functional on $\mathcal{M}$ having the lower bound $\frac{\nu(\mathbb{B}^n)^2}{\mu(\mathcal{M})}$ and show that the minimizer is volume-/mass-preserving if and only if the minimal energy attains $\frac{\nu(\mathbb{B}^n)^2}{\mu(\mathcal{M})}$.

    \item The discrete $n$-VSE can be written as a quadratic form with respect to a Laplacian matrix with modified cotangent weights having local sparse connectivity. The stretch energy defined in \cite{MHWW19,MHTL19,MHTL20,TMWH23} are equivalent to $n$-VSE when $n = 2$ and $n = 3$. A theorem about the lower bound of discrete $n$-VSE is proposed as the motivation for the proposed $n$-VSEM algorithm.

    \item We propose the $n$-VSEM algorithm for the efficient computation of $\varepsilon$-mass-preserving parameterization. The constrained optimization problem induced from the lower bound theorem is divided into a small-scale constrained boundary subproblem and a large-scale unconstrained interior subproblem. The boundary subproblem is solved by the Newton method and the interior subproblem is solved by the fixed-point method.

    \item In the numerical experiments, we shows a volume-preserving parameterization of the $3$ and $4$ dimensional ellipsoids as a special case, which indicates the $n$-VSE can reach the guaranteed lower bound, that is, the resulting map is mass-preserving. For the general manifold, the proposed algorithm compute the $\varepsilon$-mass-preserving maps with low mass distortion, demonstrating the robustness and accuracy. The application of $n$-VSEM on manifold registration and deformation also has a robust performance by the modified algorithms.

\end{itemize}

\section{$n$-dimensional Volumetric Stretch Energy}

Let $\mathcal{M} = \{\mathbf{x} = h(\mathbf{u}), \mathbf{u} = (u_1,u_2,\cdots,u_n) \in \Omega \subseteq \mathbb{R}^n\}$ be a smooth $n$-manifold topologically equivalent to an $n$-dimensional unit ball $\mathbb{B}^n$. Here we suppose that
the map $h$ satisfies that the Jacobian matrix $J_h$ is column full rank ($\det(J_h^\top J_h) \neq 0$), $\mu$ is the measure defined in $\mathcal{M}$ and $\rho_\mu$ be the corresponding density function.
Hence the mass of arbitrary subregion $B \subset \mathcal{M}$ is $\mu(B) = \int_B d\mu = \int_B \rho_\mu d\sigma$.
Let $\mathcal{N}$ be another $n$-manifold topologically equivalent to $\mathcal{M}$ with $\nu$ and $\rho_\nu$ being the measure and density function defined in it. The density functions satisfy $0< L \leq \rho_\mu, \rho_\nu < +\infty$.
An orientation preserving and bijective $C^1$ map $f:\mathcal{M} \to \mathcal{N}$ is said to be mass-preserving if
$\nu(A)/\nu(f(\mathcal{M})) = \mu(f^{-1}(A))/\mu(\mathcal{M})$
for every subregion $A \subset f(\mathcal{M})$. From the mathematical analysis we have
\begin{align}
    \nu(A) &= \int_A d\nu = \int_{(f\circ h)^{-1} (A)} (\rho_\nu \circ f \circ h) \sqrt{\det(J_{f\circ h}^\top J_{f\circ h})} ds,\\
    \mu(f^{-1}(A)) &= \int_{f^{-1} (A)} d\mu = \int_{(f\circ h)^{-1} (A)} (\rho_\mu \circ h) \sqrt{\det(J_{h}^\top J_{h})} ds,
\end{align}
where $ds$ is the volume element on $\Omega$. Therefore, the $C^1$ map $f$ should satisfy
\begin{align} \label{eq:masspres}
    \frac{(\rho_\nu \circ f \circ h) \sqrt{\det(J_{f\circ h}^\top J_{f\circ h})}}{\int_\Omega (\rho_\nu \circ f \circ h) \sqrt{\det(J_{f\circ h}^\top J_{f\circ h})} ds}
    = \frac{(\rho_\mu \circ h)\sqrt{\det(J_{h}^\top J_{h})}}{\int_\Omega (\rho_\mu \circ h)\sqrt{\det(J_{h}^\top J_{h})} ds},
\end{align}
By the way, if $\rho_\mu = \rho_\nu \equiv 1$ on $\mathcal{M}$, $f$ is said to be volume-preserving, which is a special case of the mass-preservation. In the rest of the paper, we only discuss the mass-preserving map.

To find the ideal mass-preserving map $f$, we first define the $n$-dimensional volumetric stretch energy ($n$-VSE) functional of the map $f$ as
\begin{align} \label{def:EVfcontinue}
    E_V(f) = \int_\mathcal{M} \left[\frac{\rho_{\nu} \circ f}{\rho_\mu} \Det(\nabla_\mathcal{M} f)\right]^2 d\mu,
\end{align}
where $\Det(\cdot)$ is the pseudo-determinant, the product of all nonzero eigenvalues of the matrix, and $\nabla_\mathcal{M} f := J_{f\circ h} J_{h}^\dag$ is the tangential gradient of the map $f$ on $\mathcal{M}$. By the property that $\lambda(AB^\top) = \lambda(B^\top A)\cup \{0,\cdots,0\}$ for $A,B \in \mathbb{R}^{m\times n}$, $m \leq n$ with $\lambda$ being the spectrum of a matrix, we have $\Det(AB^\top) = \Det(B^\top A)$. Then for $\det(J_{h}^\top J_h) \neq 0$ and $\det(J_{f\circ h}^\top J_{f\circ h}) \neq 0$,
\begin{align}
    \Det(\nabla_\mathcal{M} f)^2
    =& \Det(\nabla_\mathcal{M} f^\top \nabla_\mathcal{M} f)
    = \Det(J_{f\circ h} (J_{h}^\top J_h)^{-1} J_{f\circ h}^\top) \\
    =& \Det((J_{h}^\top J_h)^{-1} (J_{f\circ h}^\top J_{f\circ h}))
    = \frac{\det(J_{f\circ h}^\top J_{f\circ h})}{\det(J_{h}^\top J_h)}. \label{eq:Detderive}
\end{align}
Hence, the $n$-VSE in \eqref{def:EVfcontinue} can be written as
\begin{align}
    E_V(f) =& \int_\mathcal{M} \left(\frac{ (\rho_{\nu} \circ f )\sqrt{\det(J_{f\circ h}^\top J_{f\circ h})} }{\rho_\mu \sqrt{\det(J_{h}^\top J_h)}} \right)^2 d\mu \label{eq:EVfdetM} \\
    =& \int_\Omega \frac{\left( (\rho_{\nu} \circ f \circ h)\sqrt{\det(J_{f\circ h}^\top J_{f\circ h})} \right)^2}{(\rho_\mu \circ h) \sqrt{\det(J_{h}^\top J_h)}} ds.\label{eq:EVfdet}
\end{align}

\begin{remark}

In \eqref{eq:EVfdetM}, the formula $\sqrt{\det(J_{h}^\top J_h)}$ represents the volume ratio of $h(\tau)$ to $\tau$ with $\tau$ being a infinitesimal region on $\Omega$, that is, $\sqrt{\det(J_{h}^\top J_h)} = \lim_{|\tau| \to 0} \frac{|h(\tau)|}{|\tau|}$. Similarly, we also have $\sqrt{\det(J_{f\circ h}^\top J_{f\circ h})} = \lim_{|\tau| \to 0} \frac{|f\circ h(\tau)|}{|\tau|}$. Hence, from the derivation in \eqref{eq:Detderive}, $ \Det(\nabla_\mathcal{M} f) $ means the volume ratio of the $f\circ h(\tau)$ on $\mathcal{N}$ to $h(\tau)$ on $\mathcal{M}$. In the geometric perspective, letting $\sigma = h(\tau)$, the $n$-VSE is the sum of square of the mass ratios of each image infinitesimal region $f(\sigma)$ to the origin infinitesimal region $\sigma$ in $\mathcal{M}$. In other words,
\begin{align}
    E_V(f) = \lim_{\mu(\sigma) \to 0} \sum_{\sigma \subset \mathcal{M}} \left[ \frac{\nu(f(\sigma))}{\mu(\sigma)} \right]^2 \mu(\sigma)
\end{align}.
\end{remark}

Using the representation \eqref{eq:EVfdet}, we give a significant theorem for the lower bound of the $n$-VSE.

\begin{theorem} \label{thm:Sfcontinue}
Let $\mathcal{M}$ be an $n$-manifold to be topologically equivalent to $\mathcal{N}$, $\mu$ and $\nu$ be the measures defined on $\mathcal{M}$ and $\mathcal{N}$, $\rho_\mu$ and $\rho_\nu$ be the corresponding densities. For $n$-VSE $E_V(f)$ defined in \eqref{def:EVfcontinue}, let
\begin{align}
    S_f = \left\{E_V(f)~|~f:\mathcal{M}\to \mathcal{N} \text{ is an orientation preserving and bijective $C^1$ map with } f(\partial \mathcal{M}) = \partial \mathcal{N} \right\}. \label{set:Sfcontinue}
\end{align}
Then we have
\begin{itemize}
    \item[(i)] $\min S_f \geq \frac{\nu(\mathcal{N})^2}{\mu(\mathcal{M})}$;
    \item[(ii)] $f^* = \argmin S_f = \frac{\nu(\mathcal{N})^2}{\mu(\mathcal{M})}$ if and only if $f^*$ is mass-preserving.
\end{itemize}
\end{theorem}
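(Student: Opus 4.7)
The plan is to recognize the integrand of $E_V(f)$ in its form \eqref{eq:EVfdet} as having the shape $a^2/b$, which is exactly the setup for the Cauchy--Schwarz inequality, and to observe that $\int a\,ds$ and $\int b\,ds$ are precisely $\nu(\mathcal{N})$ and $\mu(\mathcal{M})$. Concretely, I will set
\begin{align*}
    a(\mathbf{u}) &= (\rho_\nu \circ f \circ h)(\mathbf{u})\,\sqrt{\det(J_{f\circ h}^\top J_{f\circ h})}, \\
    b(\mathbf{u}) &= (\rho_\mu \circ h)(\mathbf{u})\,\sqrt{\det(J_h^\top J_h)},
\end{align*}
both of which are strictly positive under the standing assumptions on $\rho_\mu, \rho_\nu$ and on the Jacobian ranks, so that $E_V(f) = \int_\Omega a^2/b \, ds$.

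For part (i), I would apply the Cauchy--Schwarz inequality in the form
\begin{align*}
    \left(\int_\Omega a\, ds\right)^2 = \left(\int_\Omega \frac{a}{\sqrt{b}}\cdot \sqrt{b}\, ds\right)^2 \leq \int_\Omega \frac{a^2}{b}\, ds \cdot \int_\Omega b\, ds.
\end{align*}
The change-of-variables identities preceding \eqref{eq:masspres} give $\int_\Omega b\, ds = \mu(\mathcal{M})$, and similarly $\int_\Omega a\, ds = \nu(f(\mathcal{M})) = \nu(\mathcal{N})$, where the last equality uses that $f$ is a bijection between $\mathcal{M}$ and $\mathcal{N}$ with $f(\partial \mathcal{M}) = \partial \mathcal{N}$. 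Rearranging yields $E_V(f) \geq \nu(\mathcal{N})^2/\mu(\mathcal{M})$ for every admissible $f$, which establishes (i).

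For part (ii), I would invoke the equality case of Cauchy--Schwarz: the bound is attained if and only if $a/\sqrt{b}$ and $\sqrt{b}$ are proportional a.e., i.e.\ there exists a constant $c$ such that $a = c\, b$ on $\Omega$. Integrating this identity over $\Omega$ pins down $c = \nu(\mathcal{N})/\mu(\mathcal{M})$, so the equation $a = c\, b$ can be rewritten as
\begin{align*}
    \frac{(\rho_\nu\circ f\circ h)\,\sqrt{\det(J_{f\circ h}^\top J_{f\circ h})}}{\int_\Omega (\rho_\nu\circ f\circ h)\,\sqrt{\det(J_{f\circ h}^\top J_{f\circ h})}\, ds} = \frac{(\rho_\mu\circ h)\,\sqrt{\det(J_h^\top J_h)}}{\int_\Omega (\rho_\mu\circ h)\,\sqrt{\det(J_h^\top J_h)}\, ds},
\end{align*}
which is exactly the mass-preserving condition \eqref{eq:masspres}. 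Conversely, if $f$ is mass-preserving, then \eqref{eq:masspres} delivers $a = c\, b$ for this same $c$, so $E_V(f) = c^2 \int_\Omega b\, ds = \nu(\mathcal{N})^2/\mu(\mathcal{M})$, giving the characterization claimed in (ii).

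I do not expect any serious technical obstacle: the lower bound of $L$ on the densities and the full-rank hypothesis on $J_h, J_{f\circ h}$ ensure $b > 0$ and that $a, b$ are integrable, so the use of Cauchy--Schwarz is legitimate. The only subtle point is justifying $\int_\Omega a\, ds = \nu(\mathcal{N})$, which follows because bijectivity of $f$ combined with $f(\partial\mathcal{M}) = \partial\mathcal{N}$ forces $f(\mathcal{M}) = \mathcal{N}$; everything else is a direct rewriting of the defining integrals.
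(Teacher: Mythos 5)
Your proof is correct, but it takes a genuinely different route from the paper. The paper relaxes the problem: it treats $\xi=(\rho_\nu\circ f\circ h)\sqrt{\det(J_{f\circ h}^\top J_{f\circ h})}$ as a free nonnegative function subject only to $\int_\Omega \xi\,ds=\nu(\mathcal{N})$, solves the Euler--Lagrange (KKT) equation for this relaxed convex problem to find the optimizer $\xi = \nu(\mathcal{N})(\rho_\mu\circ h)\sqrt{\det(J_h^\top J_h)}/\mu(\mathcal{M})$, and then uses $S_f\subseteq S_\xi$ to transfer the bound back to maps $f$, with the E--L characterization giving the mass-preservation equivalence. You instead apply Cauchy--Schwarz directly to $E_V(f)=\int_\Omega a^2/b\,ds$ with $\int_\Omega a\,ds=\nu(\mathcal{N})$ and $\int_\Omega b\,ds=\mu(\mathcal{M})$, and read off (ii) from the equality case $a=cb$ a.e.\ with $c=\nu(\mathcal{N})/\mu(\mathcal{M})$. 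Your argument is more elementary and self-contained: it needs no variational machinery, no convexity argument to certify that the stationary point is the global minimizer, and it yields the pointwise (a.e.) characterization of equality directly, which is exactly the mass-preserving condition \eqref{eq:masspres}; it also sidesteps the set-inclusion step and quietly handles the fact that the infimum over $f$ need not be attained, since the bound and the equality criterion hold for every admissible $f$. What the paper's relaxation buys in exchange is structural: the same KKT/relaxation template is reused verbatim for the discrete lower-bound theorem (Theorem \ref{thm:Sf_discrete}), and the explicit optimal $\xi$ is what motivates the energy-minimization algorithms. The only minor caveat in your write-up is that the equality case of Cauchy--Schwarz gives proportionality almost everywhere rather than everywhere (the densities are only assumed bounded, not continuous), but since mass-preservation is a statement about integrals over subregions this costs nothing.
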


\begin{proof}
For the representation of the $n$-VSE in \eqref{eq:EVfdet}, we treat $(\rho_{\nu} \circ f \circ h) \sqrt{\det(J_{f\circ h}^\top J_{f\circ h})}$ as the variable $\xi$.
The $n$-VSE becomes
\begin{align}
    E_V(\xi) = \int_\Omega \frac{\xi^2}{(\rho_\mu \circ h) \sqrt{\det(J_{h}^\top J_h)}} ds.
\end{align}
Consider the problem
    \begin{align} \label{opt:Sxicontinue}
        \min_{\xi} S_\xi := \left\{ E_V(\xi) \left| \int_\Omega \xi ds = \nu(\mathcal{N}), \xi \geq 0 \right.\right\}.
    \end{align}
    By Euler-Lagrange equation, we have
    \begin{align} \label{eq:KKTcontinue}
        \frac{2\xi}{(\rho_\mu \circ h) \sqrt{\det(J_{h}^\top J_h)}} + \lambda = 0.
    \end{align}
    It follows that $2\xi = -\lambda (\rho_\mu \circ h) \sqrt{\det(J_{h}^\top J_h)}$. Integrating the both sides and using the equalities $\nu(\mathcal{N}) = \nu(f(\mathcal{M})) = \int_\Omega \xi ds$ and $\mu(\mathcal{M}) = \int_\Omega (\rho_\mu \circ h) \sqrt{\det(J_{h}^\top J_h)} ds$, we have $\lambda = -2\nu(\mathcal{N}) / \mu(\mathcal{M})$. Then substituting this formula into \eqref{eq:KKTcontinue}, it follows that $\xi = \frac{\nu(\mathbb{B}^n) (\rho_\mu \circ h) \sqrt{\det(J_{h}^\top J_h)}}{\mu(\mathcal{M})}$. That is
    \begin{align} \label{eq:masspres_thm}
        \frac{(\rho_\nu \circ f \circ h)\sqrt{\det(J_{f\circ h}^\top J_{f\circ h})}}{\nu(\mathcal{N})}
        = \frac{\xi}{\nu(\mathcal{N})}
        = \frac{(\rho_\mu \circ h) \sqrt{\det(J_{h}^\top J_h)}}{\mu(\mathcal{M})},
    \end{align}
    which means the mass-preservation. Additionally, it is easy to verify that the optimization problem \eqref{opt:Sxicontinue} is convex. Therefore, the solution of the Euler-Lagrange equation is the minimizer of the optimization problem \eqref{opt:Sxicontinue} and hence
    $
        \min_\xi S_\xi = \frac{\nu(\mathcal{N})^2}{\mu(\mathcal{M})}
    $.
    Since $S_f \subseteq S_\xi$, we have
    \begin{align}
        \min_f S_f \geq \min_\xi S_\xi = \frac{\nu(\mathcal{N})^2}{\mu(\mathcal{M})},
    \end{align}
    completing the proof of (i). Meanwhile, \eqref{eq:masspres_thm} implies that $\min_f S_f$ reaches the lower bound $\frac{\nu(\mathcal{N})^2}{\mu(\mathcal{M})}$ if and only if the resulting map $f^*$ is mass-preserving. Hence, (ii) is proved.
\end{proof}

For the parameterization problem, (i) in the theorem \ref{thm:Sfcontinue} gives a lower bound of the $n$-VSE under the constraint $f(\mathcal{M}) = \mathcal{N}$. Meanwhile, (ii) tells that the $n$-VSE reaches this lower bound only when the map is mass-preserving. This motivate us to seek for the ideal mass-preserving parameterization via the $n$-VSE minimization problem.

However, it is worth noting that the existence and uniqueness of such a mass-preserving map from $\mathcal{M}$ to $\mathbb{B}^n$ are not theoretically guaranteed. The mass-preserving map absolutely satisfying the conditions in \eqref{set:Sf} for the given manifold $\mathcal{M}$ does not necessarily exist. Furthermore, even if the mass-preserving map exists, it is not necessarily unique. Taking the unit disk $\mathcal{M} = \mathbb{B}^1$ as instance, we can consider the map $f$ in the polar coordinates $(r_f,\theta_f) = (r_v,\theta_v + kr_v)$, where $k\in \mathbb{R}$ is a parameter, $v = (r_v\cos\theta_v,r_v\sin\theta_v)$ and $f(v) = (r_f\cos\theta_f,r_f\sin\theta_f)$. We can confirm that $f:\mathbb{B}^1 \to \mathbb{B}^1$ and
\begin{align}
    \det(J_f) =
    \det\left( \frac{\partial f}{\partial (r_f,\theta_f)} \right)
    \det\left( \frac{\partial (r_f,\theta_f)}{\partial (r_v,\theta_v)} \right)
    \det\left( \frac{\partial v}{\partial (r_v,\theta_v)} \right)^{-1}
    = \det\left( \frac{\partial (r_f,\theta_f)}{\partial (r_v,\theta_v)} \right) = 1.
\end{align}
Hence, the constructed map $f$ is an area-preserving map and we can select different parameters $k$ to obtain different area-preserving maps. As shown in \Cref{fig:D2D}, we plots the cases $k = 0,1,3$. When $k = 0$, $f$ is identity map. When $k = 1,3$, the red lines in \Cref{subfig:D2Dkeq0} become the curves in \Cref{subfig:D2Dkeq1} and \Cref{subfig:D2Dkeq3}, while the areas of the parts formed by the curves remain $\pi/6$.
\begin{figure}[htp]
\centering
\hfill
\subfloat[$k = 0$]{\label{subfig:D2Dkeq0}
\includegraphics[clip,trim = {3.1cm 1cm 2.5cm 0.7cm},width = 0.2\textwidth]{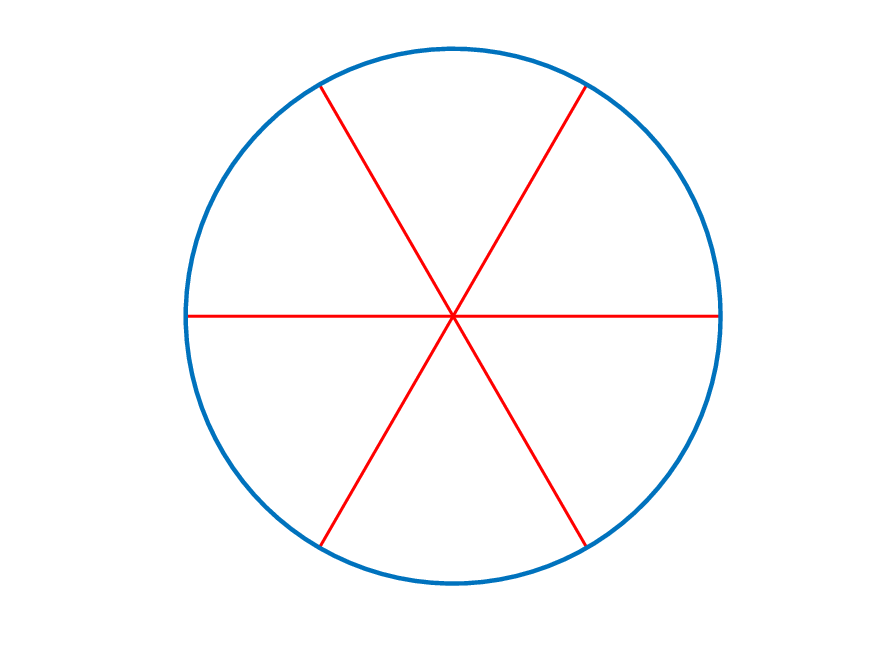}
} \hfill
\subfloat[$k = 1$]{\label{subfig:D2Dkeq1}
\includegraphics[clip,trim = {3.1cm 1cm 2.5cm 0.7cm},width = 0.2\textwidth]{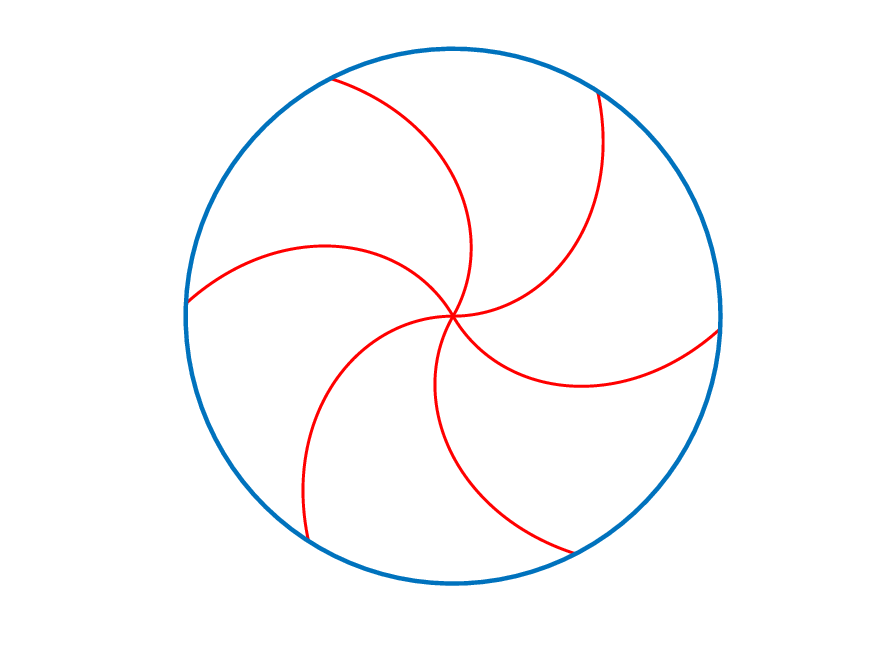}
} \hfill
\subfloat[$k = 3$]{\label{subfig:D2Dkeq3}
\includegraphics[clip,trim = {3.1cm 1cm 2.5cm 0.7cm},width = 0.2\textwidth]{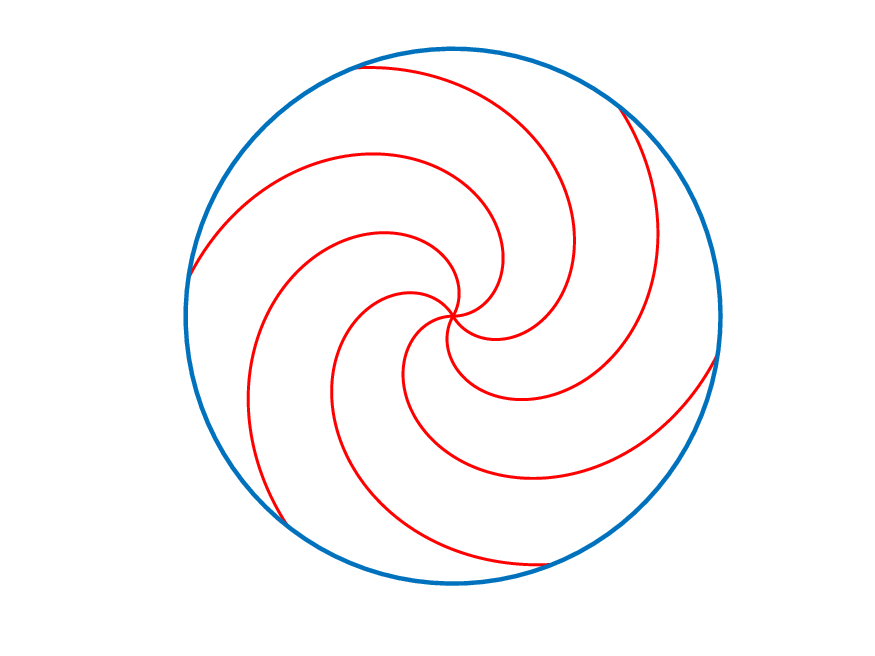}
}
\hfill
\caption{The area-preserving map $(r_f,\theta_f) = (r_v,\theta_v + kr_v)$ with the parameter $k = 0,1,3$.}
\label{fig:D2D}
\end{figure}

Practically, we cannot expect an ideally mass-preserving and unique map as above mentioned. Hence, we take a step back and aim to find a mass-preserving as possible map for the given $n$-manifold $\mathcal{M}$. To measure the degree of mass-preservation, we define the mass-preserving error of a map $f$ as
\begin{align}
    \varepsilon = E_V(f) - \frac{\nu(\mathbb{B}^n)^2}{\mu(\mathcal{M})}
\end{align}
and find an approximately mass-preserving map, named $\varepsilon$-mass-preserving map.
\begin{definition}
    A map $f:\mathcal{M} \to \mathbb{B}^n$ is an $\varepsilon$-mass-preserving map if $\varepsilon = E_V(f) - \frac{\nu(\mathbb{B}^n)^2}{\mu(\mathcal{M})}$.
\end{definition}
Then, we further give the following theorem to show the measurement of the mass-preserving error $\varepsilon$.

\begin{theorem} \label{thm:epsilon_continue}
    Let $f:\mathcal{M}\to \mathbb{B}^n$ be an orientation preserving and bijective $C^1$ map with $f(\partial \mathcal{M}) = \mathbb{S}^{n-1}$ and $\varepsilon = E_V(f) - \frac{\nu(\mathbb{B}^n)^2}{\mu(\mathcal{M})}$ and
    \begin{align}
        \delta = \frac{\nicefrac{(\rho_\nu \circ f)}{\nu(\mathbb{B}^n)}}{\nicefrac{\rho_\mu}{\mu(\mathcal{M})}} \Det(\nabla_{\mathcal{M}} f) - 1.
    \end{align}
    Then
    \begin{align}
        \varepsilon = \frac{\nu(\mathbb{B}^n)^2}{\mu(\mathcal{M})^2} \|\delta\|_{L^2(\mathcal{M})}^2. \label{ineq:epsilon_continue}
    \end{align}
\end{theorem}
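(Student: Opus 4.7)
The plan is to algebraically rewrite $E_V(f)$ in the form $(\nu(\mathbb{B}^n)/\mu(\mathcal{M}))^2 (1+\delta)^2$ integrated against $d\mu$, expand the square, and show that the cross term vanishes because the integral of $\delta$ against $d\mu$ is zero.

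First I would factor out constants in the definition of $\delta$ to obtain the pointwise identity
\begin{align*}
    \frac{\rho_\nu \circ f}{\rho_\mu}\,\Det(\nabla_\mathcal{M} f) = \frac{\nu(\mathbb{B}^n)}{\mu(\mathcal{M})}\bigl(1+\delta\bigr).
\end{align*}
Substituting into \eqref{def:EVfcontinue} gives
\begin{align*}
    E_V(f) = \frac{\nu(\mathbb{B}^n)^2}{\mu(\mathcal{M})^2}\int_\mathcal{M} \bigl(1+2\delta+\delta^2\bigr)\,d\mu,
\end{align*}
so the theorem reduces to the two integral identities $\int_\mathcal{M} d\mu = \mu(\mathcal{M})$ (which is immediate) and $\int_\mathcal{M} \delta\,d\mu = 0$.

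The second identity is the substantive step. Writing $d\mu = \rho_\mu\,d\sigma$ on $\mathcal{M}$ and then pulling back to $\Omega$ via $h$ converts $\rho_\mu\,d\sigma$ into $(\rho_\mu\circ h)\sqrt{\det(J_h^\top J_h)}\,ds$. Combined with \eqref{eq:Detderive}, which gives $\Det(\nabla_\mathcal{M} f)\sqrt{\det(J_h^\top J_h)} = \sqrt{\det(J_{f\circ h}^\top J_{f\circ h})}$, one obtains
\begin{align*}
    \int_\mathcal{M} \frac{(\rho_\nu\circ f)\,\Det(\nabla_\mathcal{M} f)}{\rho_\mu}\,d\mu
    = \int_\Omega (\rho_\nu\circ f\circ h)\sqrt{\det(J_{f\circ h}^\top J_{f\circ h})}\,ds
    = \nu(f(\mathcal{M})) = \nu(\mathbb{B}^n),
\end{align*}
where the last equality uses bijectivity of $f$ together with the boundary condition $f(\partial\mathcal{M})=\mathbb{S}^{n-1}$. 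Therefore $\int_\mathcal{M}\delta\,d\mu = (\mu(\mathcal{M})/\nu(\mathbb{B}^n))\cdot \nu(\mathbb{B}^n) - \mu(\mathcal{M}) = 0$.

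Putting everything together, $E_V(f) = \frac{\nu(\mathbb{B}^n)^2}{\mu(\mathcal{M})} + \frac{\nu(\mathbb{B}^n)^2}{\mu(\mathcal{M})^2}\|\delta\|_{L^2(\mathcal{M})}^2$, which yields \eqref{ineq:epsilon_continue}. The main obstacle is the bookkeeping in the cross-term computation: one must correctly interpret $d\mu$, apply the pullback under $h$, and invoke the pseudo-determinant identity \eqref{eq:Detderive} so that the Jacobian factors collapse to $\sqrt{\det(J_{f\circ h}^\top J_{f\circ h})}\,ds$, which is exactly the volume element on $\mathbb{B}^n$ pulled back to $\Omega$. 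Once that reduction is done, the vanishing of $\int_\mathcal{M}\delta\,d\mu$ is automatic and the rest is an arithmetic expansion.
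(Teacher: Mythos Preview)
Your proposal is correct and follows essentially the same route as the paper: both arguments expand $E_V(f)$ as a constant times $\int (1+\delta)^2$ against the $\mu$-measure and use $\nu(f(\mathcal{M}))=\nu(\mathbb{B}^n)$ to kill the cross term. The only cosmetic difference is that the paper carries out the computation on $\Omega$ via the normalized densities $\tilde{\xi}$ and $\tilde{\mu}$ (so the cross term vanishes because $\int_\Omega\tilde{\mu}\delta\,ds=\int_\Omega(\tilde{\xi}-\tilde{\mu})\,ds=0$), whereas you work directly on $\mathcal{M}$ and pull back only when verifying $\int_\mathcal{M}\delta\,d\mu=0$; the content is identical.
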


\begin{proof}
Let
\begin{align}
&\tilde{\xi} = \frac{(\rho_\nu \circ f \circ h) \sqrt{\det(J_{f\circ h}^\top J_{f\circ h})}}{\nu(\mathbb{B}^n)},\quad
\tilde{\mu} = \frac{(\rho_\mu \circ h) \sqrt{\det(J_{h}^\top J_h)}}{\mu(\mathcal{M})},\\
& \tilde{\mu}_{\text{max}} := \max \tilde{\mu} = \frac{\mu_{\text{max}}}{\mu(\mathcal{M})}, \label{eq:tildemumax}\\
&\tilde{\mu}_{\text{min}} := \min \tilde{\mu} = \frac{\mu_{\text{min}}}{\mu(\mathcal{M})}. \label{eq:tildemumin}
\end{align}
Then $\delta = \tilde{\xi} / \tilde{\mu} - 1$ and
\begin{align}
    \int_\Omega \tilde{\xi} ds = \int_\Omega \tilde{\mu} ds = 1.
\end{align}
Using this equality and \eqref{eq:EVfdet}, we have
\begin{align}
    \varepsilon = & E_V(f) - \frac{\nu(\mathbb{B}^n)^2}{\mu(\mathcal{M})}
    =  \frac{\nu(\mathbb{B}^n)^2}{\mu(\mathcal{M})} \left[ \int_\Omega
    \tilde{\mu}(\delta + 1)^2 ds - 1 \right]\\
    = & \frac{\nu(\mathbb{B}^n)^2}{\mu(\mathcal{M})} \int_\Omega
    \tilde{\mu}[(\delta+1)^2 - 1] ds
    = \frac{\nu(\mathbb{B}^n)^2}{\mu(\mathcal{M})} \int_\Omega
    \tilde{\mu}\delta^2 ds\\
    =& \frac{\nu(\mathbb{B}^n)^2}{\mu(\mathcal{M})^2} \int_\mathcal{M} \delta^2 d\mu = \frac{\nu(\mathbb{B}^n)^2}{\mu(\mathcal{M})^2} \|\delta\|_{L^2(\mathcal{M})}^2.  \label{eq:epsilon_proof1_continue}
\end{align}

\end{proof}

The term $\delta^2$ is the square of the relative error of $\frac{(\rho_\nu \circ f \circ h)\sqrt{\det(J_{f\circ h}^\top J_{f\circ h})}}{\nu(\mathbb{B}^n)}$ and $\frac{(\rho_\mu \circ h) \sqrt{\det(J_{h}^\top J_h)}}{\mu(\mathcal{M})}$, which can represent a local error at a point in $\mathcal{M}$. From \eqref{eq:epsilon_proof1_continue} we can find that $\varepsilon$ is equivalent to the sum of all local errors $\delta^2$ among the manifold $\mathcal{M}$. Hence, it is appropriate to choose $\varepsilon$ as the measurement. In this paper, we aim to find the $\varepsilon$-mass-preserving map with as small error $\varepsilon$ as possible via minimizing $S_f$ in \eqref{set:Sfcontinue}.

\begin{remark}
For $\partial \mathcal{M}$ topologically equivalent to $\mathcal{S}^{n-1}$ and the map $f:\partial \mathcal{M} \to \mathcal{S}^{n-1}$, we can also give the similar theorems to \Cref{thm:Sfcontinue} and \Cref{thm:epsilon_continue}. The theorems and proofs are almost identical and hence not worth being repeated.
\end{remark}

\section{Discrete $n$-Manifold and $n$-Volumetric Stretch Energy}

Now, we focus on the algorithm for the mass-preserving parameterization. At first, we introduce the manifolds in triangular mesh and the $n$-VSE in the discrete perspective. The notations used in the previous section will be abused without being misinterpreted to represent the discrete notations. In this paper, we consider the mass-preserving parameterization to the $n$-ball with $\rho_\nu = 1$. Therefore, in the discrete case, we let $\nu(\cdot) = |\cdot|$ and $\rho := \rho_\mu$ for simplicity.

\subsection{Discrete $n$-Manifold}

The discrete $n$-manifold discussed in this paper is composed of simplices. We first give the definitions of the discrete $n$-manifold and the related $n$-simplex.

\begin{definition}
(1) Let $\{v_0,v_1,\cdots,v_k\}\subseteq \mathbb{R}^n$ with $k\leq n$. A $k$-simplex $\sigma = [v_0,v_1,\cdots,v_k]$ is defined by
    \begin{align}
        \sigma = [v_0,v_1,\cdots,v_k]
        = \left\{\mathbf{x}\in \mathbb{R}^n~\Bigg|~ \mathbf{x} = \sum_{i = 0}^k \alpha_iv_i, \sum_{i = 0}^k \alpha_i = 1, \alpha_i \geq 0 \right\}.
    \end{align}
    An $\ell$-facet $f_a(\sigma)$ of $\sigma$ with $\ell<k$ is defined as $f_a(\sigma) = \{\tau \subseteq \sigma~|~ \tau \text{ is an $\ell$-simplex}\}$.
$v_0,\cdots,v_k$ are called the vertices of the $k$-simplex $\sigma$. Suppose $\tau \subset \sigma$ is also an $\ell$-simplex ($\ell<n$), then $\tau$ is an $\ell$-facet of $\sigma$. Let $f_a(\sigma)$ denote the set of all facets of $\sigma$.

(2) A simplicial $n$-complex $S$ is a union of $n$-simplices such that (i) if an $n$-simplex belongs to $S$, all its facets also belongs to $S$; (ii) if $\sigma_1,\sigma_2\in S$, $\sigma_1\cap \sigma_2 \neq \varnothing$ (empty set), $\sigma_1 \cap \sigma_2 \in f_a(\sigma_1)$ and $f_a(\sigma_2)$.

(3) The link of a vertex $v\in S$ is the union of $(n-1)$-simplex $\tau\in S$ with $v\notin \tau$ and $[v,\tau]$ being an $n$-simplex of $S$.

(4) $v\in S$ is an interior vertex, if the link of $v$ is topologically equivalent to $\mathbb{S}^{n-1}$; otherwise $v$ is a boundary vertex.

\end{definition}

\begin{definition} \label{def:manifold_discrete}
    (1) A discrete $n$-manifold $\mathcal{M}$ is a simplicial $n$-complex which is topologically equivalent to $\mathbb{B}^n$.

    (2) $\partial \mathcal{M}$ is a discrete $(n-1)$-manifold which consists of all $(n-1)$-facets of $\mathcal{M}$ with all vertices being boundary vertices.
\end{definition}

The discrete $n$-manifold has some properties.
\begin{property}
    (1) $\partial \mathcal{M}$ is topologically equivalent to $\mathbb{S}^{n-1}$.

    (2) If $v\in \mathcal{M}$  is an interior vertex, the link of $v$ is topologically equivalent to $\mathbb{S}^{n-1}$. If $v\in \partial \mathcal{M}$ is a boundary vertex of $\mathcal{M}$, the link of $v$ on $\partial \mathcal{M}$ is topologically equivalent to $\mathbb{S}^{n-2}$.
\end{property}

A discrete $n$-manifold $\mathcal{M}$ has $N$ vertices
\begin{align*}
    \mathbb{S}_0(\mathcal{M}) = \{v_i = (v_i^1,v_i^2,\cdots,v_i^n) \in \mathbb{R}^n\}_{i=1}^N
\end{align*}
and the set of $n$-simplex $\mathbb{S}_n(\mathcal{M})$ and the set of $k$-facets $\mathbb{S}_k(\mathcal{M})$, for $k<n$, are defined as follows.
\begin{align}
    \mathbb{S}_n(\mathcal{M}) = \big\{[v_{i_0},v_{i_1},\cdots,v_{i_n}] \subseteq \mathbb{R}^n \text{ for some } v_{i_j} \in \mathbb{S}_0(\mathcal{M}), ~j = 0,1,\cdots,n\big\}.
\end{align}
For $k<n$,
\begin{align}
    \mathbb{S}_k(\mathcal{M}) = \big\{ [v_{i_0},v_{i_1},\cdots,v_{i_k}] \subseteq \mathbb{R}^n ~|~ [v_{i_0},v_{i_1},\cdots,v_{i_k},v_{i_{k+1}}] \in \mathbb{S}_{k+1}(\mathcal{M})
    \text{ with some } v_{i_{k+1}} \in \mathbb{S}_0(\mathcal{M}) \big\}.
\end{align}
For every $\tau \in \mathbb{S}_k(\mathcal{M})$, there is $\sigma \in \mathbb{S}_{p}(\mathcal{M})$ such that $\tau \subset \sigma$ for $k<p \leq n$.

Since an affine map in $\mathbb{R}^n$ is determined by $n+1$ independent point correspondences, a piecewise affine map $f:\mathcal{M}\to \mathbb{R}^n$ on an $n$-simplex mesh $\mathcal{M}$ can be expressed as an $N\times n$ matrix defined by the images $f(v_i)$, where $v_i \in \mathbb{S}_0(\mathcal{M})$, as
\begin{align}
    \mathbf{f} = [\mathbf{f}_1^\top, \mathbf{f}_2^\top, \cdots, \mathbf{f}_N^\top]^\top
    = [\mathbf{f}^1, \mathbf{f}^2, \cdots, \mathbf{f}^n] \in \mathbb{R}^{N\times n},
\end{align}
where $\mathbf{f}^s = [f_1^s,f_2^s,\cdots,f_N^s]^\top \in \mathbb{R}^{N\times 1}$, $s = 1,2,\cdots,n$, and $\mathbf{f}_{t} = f(v_t) = (f^1_t,f^2_t,\cdots,f^n_t) \in \mathbb{R}^{1\times n}$, for $t = 1,2,\cdots,N$. For each point $v\in \mathbb{S}_0(\mathcal{M})$, $v$ must lie into an $n$-simplex $\sigma = [v_0,v_1,\cdots,v_n]$ without loss of generality. The piecewise affine map $f:\mathcal{M} \to \mathbb{R}^n$ can be expressed as a linear combination of $\{\mathbf{f}_i\}_{i=0}^n$ with the barycentric coordinates
\begin{align} \label{eq:bary}
    f\big|_\sigma (v) = \sum_{i=0}^n \alpha_i f(v_i), \quad
    \alpha_i = \frac{1}{|\sigma|} \big| [v_0,\cdots,\hat{v}_i,\cdots,v_n] \big|,
\end{align}
where $\hat{v}_i$ is replaced by $v$, for $i = 0,1,\cdots,n$ and $|\sigma|$ denotes the volume of the $n$-simplex $\sigma$. To prevent folding of $n$-simplex and guarantee the bijectivity via transformation, in this paper, we require the map $f$ to be solved that preserves the orientation, \emph{i.e.} $f(\mathcal{M})$ is a simplicial $n$-complex. In the following discussion, the $n$-manifold is that topologically equivalent to an $n$-ball, and the $(n-1)$-manifold is that topologically equivalent to an $(n-1)$-sphere.

\subsection{Discrete Volumetric Stretch Energy}

For the discrete $n$-manifold $\mathcal{M}$ embedded in  $\mathbb{R}^n $ and the piecewise affine map $f:\mathcal{M} \to \mathbb{B}^n \subset \mathbb{R}^n$, it is easy to verify that the Jacobian matrix $J_{f^{-1}}$ satisfies
\begin{align}
    \det\left(J_{f^{-1}}\big|_{f(\sigma)}\right) := \int_{f(\sigma)} \det\left(J_{f^{-1}}\right) ds = \frac{|\sigma|}{|f(\sigma)|}
\end{align}
for all $\sigma = [v_0,v_1,\cdots,v_n] \in \mathbb{S}_n(\mathcal{M})$. This is the ratio of volumes of the $n$-simplex and its image simplex. Hence, like the continuous case, the piecewise affine map $f:\mathcal{M} \to \mathbb{R}^n$ is said to be induced by $\mathbf{f}$ and is volume-preserving if the Jacobian $J_{f^{-1}}$ satisfies $\det\left(J_{f^{-1}}\big|_{f(\sigma)}\right) = \frac{|\mathcal{M}|}{|f(\mathcal{M})|}$, that is $\nicefrac{|\sigma|}{|\mathcal{M}|} = \nicefrac{|f(\sigma)|}{|f(\mathcal{M})|}$.
Additionally, let the density function $\rho$ defined in $\mathcal{M}$ be piecewise constant and the mass $\mu(\sigma) = \rho(\sigma)|\sigma|$. The map $f:\mathcal{M}\to \mathbb{R}^n$ is mass-preserving if and only if $\rho(\sigma)\det\left(J_{f^{-1}}\big|_{f(\sigma)}\right) = \frac{\mu(\mathcal{M})}{|f(\mathcal{M})|}$, that is, $\nicefrac{\mu(\sigma)}{\mu(\mathcal{M})} = \nicefrac{|f(\sigma)|}{|f(\mathcal{M})|}$.
The Jacobian determinant is an important quantity to measure the mass-preservation. As far as $\rho(\sigma)\det\left(J_{f^{-1}}\big|_{f(\sigma)}\right) = \frac{\mu(\sigma)}{|f(\sigma)|}$ is constant for all $\sigma \in \mathbb{S}_n(\mathcal{M})$, the map $f$ is mass-preserving. In \cite{MHWW19,MHTL19,TMWH23}, it is termed the stretch factor as
\begin{align}
    \sigma_{\mu,f^{-1}}(\sigma) = \rho(\sigma)\det\left(J_{f^{-1}}\big|_{f(\sigma)}\right) =  \frac{\mu(\sigma)}{|f(\sigma)|}, \label{eq:stretchfactor}
\end{align}
and expected to be a constant. Using the stretch factor, \cite{MHWW19,MHTL19,TMWH23} have defined the $2$-dimensional stretch energy and $3$-dimensional stretch energy and given some theorems and properties associated to the stretch energy when $n = 2$ and $3$. Now, we generalize the stretch energy into $n$-dimensional and define the discrete $n$-dimensional volumetric stretch energy functional as
\begin{align} \label{eq:EVcot}
    E_V(f) = \frac{1}{n} \trace \big( \mathbf{f}^\top L_V(f) \mathbf{f} \big) = \frac{1}{n} \sum_{s = 1}^n {\mathbf{f}^{s}}^\top L_V(f) \mathbf{f}^s,
\end{align}
where $L_V(f)$ is a volumetric stretch symmetric Laplacian matrix with
\begin{subequations}
\begin{equation} \label{eq:LVf}
    \big[ L_V(f) \big]_{ij} = \begin{cases}
    -w_{ij}(f), & \text{if } [v_i,v_j]\in \mathbb{S}_1(\mathcal{M}),\\
    \sum_{\ell \neq i} w_{i\ell}(f), & \text{if } i = j,\\
    0, & \text{otherwise}.
    \end{cases}
\end{equation}
\end{subequations}
in which the modified cotangent weights $w_{ij}(f)$ is defined by
\begin{subequations}
\begin{equation} \label{eq:wijf}
    w_{ij}(f) = \frac{1}{n(n-1)} \sum_{[v_i,v_j] \in \mathbb{S}_1(\sigma)} \frac{|f(\sigma_{\hat{i}\hat{j}})|}{\sigma_{\mu,f^{-1}}(\sigma)} \cot\theta_{ij}^{\sigma}(f)
\end{equation}
\end{subequations}
with $\sigma_{\hat{i}\hat{j}}$ being the $(n-2)$-simplex containing the vertices $\mathbb{S}_0(\sigma)\setminus \{v_i,v_j\}$ and
$\theta_{ij}^\sigma(f)$ being the dihedral angle between subspaces $U_i$ and $U_j$ containing $\mathbb{S}_0(f(\sigma))\setminus \{\mathbf{f}_i\}$ and $\mathbb{S}_0(f(\sigma))\setminus \{\mathbf{f}_j\}$ respectively.

In the \Cref{thm:EVf}, we will illustrate that this definition in \eqref{eq:EVcot} is equivalent to our defined $n$-VSE in \eqref{def:EVfcontinue} in the discrete form
\begin{align}
    E_V(f) = \int_\mathcal{M} \frac{\Det(\nabla_\mathcal{M} f)^2}{\rho} ds =  \sum_{\sigma\in \mathbb{S}_n(\mathcal{M}) } \frac{|f(\sigma)|^2}{\mu(\sigma)}.
\end{align}
To proof this theorem, we first generalize the Dirichlet energy to $n$-dimension and give some lemmas associated with it. For the smooth $n$-manifold $\mathcal{M}$ and the orientation preserving and bijective $C^1$ map $f:\mathcal{M}\to \mathbb{R}^n$, the $n$-dimensional Dirichlet energy functional with respect to $f$ is defined as
\begin{align} \label{def:ndDirichlet}
    E_D(f)
    = \frac{1}{n}\int_{\mathcal{M}} \|\nabla_\mathcal{M} f\|_F^2 ds,
\end{align}
In the discrete case, we consider the discrete $n$-manifold $\mathcal{M}$ and the piecewise affine map $f$. The discrete Dirichlet energy is formulated as a quadratic form.

\begin{lemma} \label{lma:Dirichlet}
Suppose $\mathcal{M}$ is a discrete $n$-manifold embedded in $\mathbb{R}^n$ and $f$ defined on $\mathcal{M}$ is a piecewise affine map. Then the $n$-dimensional Dirichlet energy defined in \eqref{def:ndDirichlet} can be formulated as
\begin{align} \label{eq:discreteED}
    E_D(f) = \frac{1}{n}\trace(\mathbf{f}^\top L_D \mathbf{f}),
\end{align}
where
\begin{align} \label{eq:LD}
    \big[L_D\big]_{ij} = \begin{cases}
        -\tilde{w}_{ij}, & \textrm{if } [v_i,v_j] \in \mathbb{S}_1(\mathcal{M}),\\
        \sum_{\ell \neq i} \tilde{w}_{ij}, & \textrm{if } i = j,\\
        0, & \textrm{otherwise.}
    \end{cases}
\end{align}
with the cotangent weight
\begin{align}
    \tilde{w}_{ij} = \frac{1}{n(n-1)}\sum_{\sigma\in \mathbb{S}_n(\mathcal{M}):\{v_i,v_j\}\subset\sigma} |\sigma_{\hat{i}\hat{j}}| \cot\theta_{ij}^\sigma,
\end{align}
in which $\theta_{ij}^{\sigma}$ is the dihedral angle between subspaces containing $\mathbb{S}_0(\sigma)\setminus \{v_i\}$ and $\mathbb{S}_0(\sigma)\setminus \{v_j\}$ respectively.
\end{lemma}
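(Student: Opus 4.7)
The plan is to prove the lemma in the classical way one obtains cotangent-Laplacian identities, promoted to arbitrary dimension. First I would localize: since $f$ is piecewise affine and $\nabla_\mathcal{M} f$ is constant on each top-dimensional simplex, the integral in \eqref{def:ndDirichlet} decomposes as
\begin{align*}
E_D(f) = \frac{1}{n}\sum_{\sigma\in\mathbb{S}_n(\mathcal{M})} |\sigma|\,\|\nabla_\sigma f\|_F^2,
\end{align*}
so the task reduces to showing that for each $n$-simplex $\sigma=[v_0,\dots,v_n]$,
\begin{align*}
|\sigma|\,\|\nabla_\sigma f\|_F^2 = \frac{1}{2n(n-1)}\sum_{\substack{v_i,v_j\in\mathbb{S}_0(\sigma)\\ i\neq j}} |\sigma_{\hat{i}\hat{j}}|\,\cot\theta_{ij}^\sigma\,\|\mathbf{f}_i-\mathbf{f}_j\|^2,
\end{align*}
since the standard quadratic-form expansion of $\mathbf{f}^\top L_D\mathbf{f}$ yields exactly $\tfrac{1}{2}\sum_{i\neq j}\tilde w_{ij}\|\mathbf{f}_i-\mathbf{f}_j\|^2$ in each coordinate of $f$, and summation over simplices reproduces $\tilde w_{ij}$.

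Next I would expand $\nabla_\sigma f$ via barycentric coordinates. Writing $f|_\sigma = \sum_{i=0}^n \alpha_i\mathbf{f}_i$ as in \eqref{eq:bary}, one has $\nabla_\sigma f = \sum_i \mathbf{f}_i \otimes \nabla\alpha_i$, hence
\begin{align*}
\|\nabla_\sigma f\|_F^2 = \sum_{i,j} (\mathbf{f}_i^\top\mathbf{f}_j)(\nabla\alpha_i\cdot\nabla\alpha_j).
\end{align*}
The key algebraic step is to use $\sum_i\alpha_i\equiv 1$, whence $\sum_i\nabla\alpha_i=0$; substituting $\nabla\alpha_i\cdot\nabla\alpha_i = -\sum_{j\neq i}\nabla\alpha_i\cdot\nabla\alpha_j$ and symmetrizing collapses the double sum to
\begin{align*}
\|\nabla_\sigma f\|_F^2 = -\tfrac{1}{2}\sum_{i\neq j}\|\mathbf{f}_i-\mathbf{f}_j\|^2\,(\nabla\alpha_i\cdot\nabla\alpha_j),
\end{align*}
matching the desired edge structure up to computing $\nabla\alpha_i\cdot\nabla\alpha_j$ geometrically.

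The geometric identification is that $\nabla\alpha_i = -\mathbf{n}_i/h_i$, where $\mathbf{n}_i$ is the outward unit normal to the facet $\sigma_i$ opposite $v_i$ and $h_i$ is the height from $v_i$ to $\sigma_i$; consequently $\nabla\alpha_i\cdot\nabla\alpha_j = -\cos\theta_{ij}^\sigma/(h_ih_j)$, where $\theta_{ij}^\sigma$ is the dihedral angle between $\sigma_i$ and $\sigma_j$ in the sense of the lemma. It remains to identify
\begin{align*}
\frac{|\sigma|\cos\theta_{ij}^\sigma}{h_ih_j} = \frac{|\sigma_{\hat{i}\hat{j}}|\cot\theta_{ij}^\sigma}{n(n-1)},
\end{align*}
which I would derive from the two simplex-volume recursions $n|\sigma|=h_i|\sigma_i|$ and $(n-1)|\sigma_i|=d_j|\sigma_{\hat{i}\hat{j}}|$, where $d_j$ is the distance from $v_j$ to the affine span of $\sigma_{\hat{i}\hat{j}}$ inside $\sigma_i$.

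The only nontrivial geometric claim is $d_j = h_j/\sin\theta_{ij}^\sigma$, which I expect to be the main obstacle in making the argument rigorous (even though it is well known in dimension two and three). I would justify it by restricting attention to the $2$-plane through $v_j$ orthogonal to the affine span of $\sigma_{\hat{i}\hat{j}}$: in this plane the hyperplanes of $\sigma_i$ and $\sigma_j$ trace two lines meeting at angle $\theta_{ij}^\sigma$, $v_j$ sits on the $\sigma_i$-line at distance $d_j$ from the intersection, and its perpendicular distance to the $\sigma_j$-line is exactly $h_j$, yielding $h_j = d_j\sin\theta_{ij}^\sigma$. Substituting and combining the volume relations produces the required cotangent identity, and summing the resulting per-simplex contributions across all $\sigma\in\mathbb{S}_n(\mathcal{M})$ gives the formula \eqref{eq:discreteED} with weights \eqref{eq:LD}, completing the proof.
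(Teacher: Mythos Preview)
Your proof is correct and follows essentially the same route as the paper: localize to each simplex, expand $\nabla_\sigma f$ via barycentric coordinates, identify $\nabla\alpha_i$ with the normal-over-height, and reduce the coefficient to cotangent form using the two volume recursions together with the height relation $h_j=d_j\sin\theta_{ij}^\sigma$ (which is exactly the paper's $h_j^\sigma=h_j^{\sigma_{\hat i}}\sin\theta_{ij}^\sigma$).

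The one genuine difference is the algebraic step after expanding $\|\nabla_\sigma f\|_F^2=\sum_{i,j}(\mathbf f_i^\top\mathbf f_j)(\nabla\alpha_i\cdot\nabla\alpha_j)$. You invoke $\sum_i\nabla\alpha_i=0$ to collapse this directly into the edge-difference form $-\tfrac12\sum_{i\neq j}\|\mathbf f_i-\mathbf f_j\|^2(\nabla\alpha_i\cdot\nabla\alpha_j)$, so only the off-diagonal inner products $\nabla\alpha_i\cdot\nabla\alpha_j$ need geometric identification. The paper instead keeps the diagonal terms $\|\mathbf f_i\|^2\,|\nabla\alpha_i|^2$ separate and eliminates them by appealing to Crane's $n$-dimensional cotangent identity $|\sigma_{\hat i}|=\tfrac{1}{n-1}\sum_{j\neq i}|\sigma_{\hat i\hat j}|\,h_i^\sigma\cot\theta_{ij}^\sigma$, arriving at the expression $\sum \tilde w_{ij}^\sigma\,\mathbf f_{ij}\mathbf f_i^\top$. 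Your route is a bit more self-contained and yields the symmetric $\|\mathbf f_i-\mathbf f_j\|^2$ form immediately; the paper's route buys a direct link to the cited cotangent formula, which it reuses later (e.g.\ in the gradient computation of Lemma~\ref{lma:volume}).
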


\begin{proof}

Let $q$ be a point in the $n$-simplex $\sigma =[v_0,v_1,\cdots,v_{n}] \in \mathbb{S}_n(\mathcal{M})$. $f(q)$ can be represented by the barycentre coordinates according to \eqref{eq:bary}
\begin{align} \label{eq:baryq}
    f(q) = \sum_{i = 0}^{n} \alpha_i(q) \mathbf{f}_i,\quad
    \alpha_i(q) = \frac{1}{|\sigma|}\big| [v_0,\cdots,v_{i-1},q,v_{i+1},\cdots,v_{n}] \big|.
\end{align}

For the discrete $n$-manifold $\mathcal{M}$ embedded in $\mathbb{R}^n$ and the piecewise affine map $f$, we have $\nabla_\mathcal{M} f = \nabla f$.
In addition, $\nabla f$ is a piecewise constant function and can be also represented by the barycentre coordinates as
\begin{align}
    \nabla f = \sum_{i=0}^n \mathbf{f}_i^\top \nabla \alpha_i.
\end{align}
The volume of the simplex in \eqref{eq:baryq} can be expressed as
\begin{align} \label{eq:VolumeDecom}
    \big| [v_0,\cdots,v_{i-1},q,v_{i+1},\cdots,v_{n}] \big| = \frac{1}{n} \big| \sigma_{\hat{i}} \big| h_i^{\sigma}(q),
\end{align}
where $h_i^{\sigma}(q) = (q - v_\ell)^\top \vec{n}_i$, $\ell \neq i$, is the height corresponding to the face $\sigma_{\hat{i}}$, whose gradient with respect to $q$ is $\vec{n}_i \in \mathbb{R}^{1\times n}$, the inward unit normal vector of the face $\sigma_{\hat{i}}$. Therefore, by the representation in \eqref{eq:VolumeDecom},
we have
\begin{align} \label{eq:NablaLambda}
    \nabla \alpha_i = \frac{\nabla \big| [v_0,\cdots,v_{i-1},q,v_{i+1},\cdots,v_{n}] \big|}{|\sigma|}
    = \frac{1}{n|\sigma|} | \sigma_{\hat{i}} | \vec{n}_i.
\end{align}
Hence,
\begin{align}
    \|\nabla f\|^2_F
    &= \left\|\sum_{i = 0}^n \mathbf{f}_i \nabla \alpha_i \right\|^2_F
    = \left\|\sum_{i = 0}^n \frac{|\sigma_{\hat{i}}|}{n|\sigma|} \mathbf{f}_i^\top \vec{n}_i \right\|^2_F \\
    &= \sum_{i = 0}^n \sum_{j = 0}^n \frac{|\sigma_{\hat{i}}||\sigma_{\hat{j}}|}{n^2|\sigma|^2} \langle \mathbf{f}_{i}^\top\vec{n}_i, \mathbf{f}_{j}^\top\vec{n}_j \rangle
    = \sum_{i = 0}^n\sum_{j = 0}^n \frac{|\sigma_{\hat{i}}||\sigma_{\hat{j}}|}{n^2|\sigma|^2}  (\vec{n}_i \vec{n}_j^\top) (\mathbf{f}_{i}\mathbf{f}_{j}^\top)\\
    &=
    \sum_{i = 0}^n \frac{|\sigma_{\hat{i}}|^2}{n^2|\sigma|^2} \mathbf{f}_{i}\mathbf{f}_{i}^\top
    - \sum_{[v_i,v_j]\in\mathbb{S}_{1}(\sigma)} \frac{|\sigma_{\hat{i}}||\sigma_{\hat{j}}|}{n^2|\sigma|^2} (\mathbf{f}_{i}\mathbf{f}_{j}^\top) \cos\theta_{ij}^\sigma. \label{eq:nfng}
\end{align}
Similar to \eqref{eq:VolumeDecom}, we have $(k-1)|\sigma_{\hat{i}}| = |\sigma_{\hat{i}\hat{j}}| h_j^{\sigma_{\hat{i}}}$, where $h_j^{\sigma_{\hat{i}}}$ is the height corresponding to the face $\sigma_{\hat{i}\hat{j}}$ on the $(n-1)$-simplex $\sigma_{\hat{i}}$. It is easy to verify that $ h_j^{\sigma} = h_j^{\sigma_{\hat{i}}} \sin\theta_{ij}^\sigma$, following that
\begin{align} \label{eq:nfng1}
    &\frac{|\sigma_{\hat{i}}||\sigma_{\hat{j}}|}{n^2|\sigma|^2}
    = \frac{|\sigma_{\hat{i}}|}{n|\sigma|h_j^{\sigma}}
    = \frac{|\sigma_{\hat{i}\hat{j}}|}{n(n-1)|\sigma|\sin\theta_{ij}^\sigma}.
\end{align}
From \cite{Crane19}, we have
\begin{align} \label{eq:nfng2}
|\sigma_{\hat{i}}| = \frac{1}{n-1}\sum_{[v_i,v_j]\in\mathbb{S}_{1}(\sigma)} |\sigma_{\hat{i}\hat{j}}| h_i^{\sigma} \cot\theta_{ij}^\sigma.
\end{align}
Substituting \eqref{eq:nfng1} and \eqref{eq:nfng2} into \eqref{eq:nfng}, we have
\begin{align}
    \|\nabla f\|_F^2 &= \frac{1}{|\sigma|} \sum_{i = 0}^n \sum_{[v_i,v_j]\in \mathbb{S}_1(\sigma)} \frac{1}{n(n-1)} |\sigma_{\hat{i}\hat{j}}| \cot\theta_{ij}^\sigma \mathbf{f}_{ij} \mathbf{f}_j^\top
    = \frac{1}{|\sigma|} \sum_{[v_i,v_j]\in \mathbb{S}_1(\sigma)} \tilde{w}_{ij}^\sigma \mathbf{f}_{ij} \mathbf{f}_i^\top.
\end{align}
As a result,
\begin{align}
    E_D(f) =& \frac{1}{n} \int_{\mathcal{M}}\|\nabla f\|_F^2 ds
    = \frac{1}{n} \sum_{\sigma\in \mathbb{S}_n(\mathcal{M})} \|\nabla f\|_F^2 |\sigma|\\
    =& \frac{1}{n} \sum_{\sigma\in \mathbb{S}_n(\mathcal{M})}\sum_{i\in \mathbb{S}_0(\sigma)} \sum_{[v_i,v_j]\in \mathbb{S}_1(\sigma)} \tilde{w}_{ij}^\sigma \mathbf{f}_{ij} \mathbf{f}_i^\top
    = \frac{1}{n} \sum_{i = 1}^N \sum_{[v_i,v_j]\in \mathbb{S}_1(\mathcal{M})} \tilde{w}_{ij} \mathbf{f}_{ij} \mathbf{f}_i^\top
    = \frac{1}{n} \trace(\mathbf{f}^\top L_D \mathbf{f}).
\end{align}
\end{proof}

One can see that $L_D$ has the identical sparse structure to $L_V(f)$ in \eqref{eq:LVf}. We further let $L_{D}(f)$ be the Laplacian matrix defined on $f(\mathcal{M})$, as like $L_D$ in \eqref{eq:LD}. The cotangent weight of $L_D(f)$ is
$\tilde{w}_{ij}(f) = \frac{1}{n(n-1)} \sum_{\sigma\in \mathbb{S}_n(\mathcal{M}): \{v_i,v_j\} \subset \sigma} |f(\sigma_{\hat{i}\hat{j}})| \cot \theta_{ij}^{\sigma}(f).$
Then, a representation of $|f(\mathcal{M})|$ and its gradient associated with $L_D(f)$ is as in the following lemma.
\begin{lemma} \label{lma:volume}
    The volume of the discrete $n$-manifold $f(\mathcal{M})$ and its gradient with respect to $\mathbf{f}$ can be represented as
    \begin{align}
        &|f(\mathcal{M})| = \frac{1}{n} \trace(\mathbf{f}^\top L_D(f) \mathbf{f}), \label{eq:f(M)cot}\\
        &\nabla_\mathbf{f} |f(\mathcal{M})| = L_D(f) \mathbf{f}. \label{eq:nablaf(M)}
    \end{align}
\end{lemma}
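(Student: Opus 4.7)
The plan is to derive both identities from \Cref{lma:Dirichlet} by a short variational argument, viewing $f(\mathcal{M})$ as itself a discrete $n$-manifold embedded in $\mathbb{R}^n$.

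For \eqref{eq:f(M)cot}, I would apply \Cref{lma:Dirichlet} to the identity map $g = \mathrm{id}_{\mathbb{R}^n}\colon f(\mathcal{M}) \to \mathbb{R}^n$. Since $\nabla g \equiv I_n$ on every $n$-simplex and $\|I_n\|_F^2 = n$, the Dirichlet energy collapses to $E_D(g) = \frac{1}{n}\int_{f(\mathcal{M})} n\,ds = |f(\mathcal{M})|$. The Laplacian that \Cref{lma:Dirichlet} produces on the mesh $f(\mathcal{M})$ is exactly $L_D(f)$ by the definition of the weights $\tilde w_{ij}(f)$, and the vertex-value matrix of $g$ is $\mathbf{f}$ itself, which immediately yields the quadratic-form representation.

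For \eqref{eq:nablaf(M)}, my plan is to compute the first variation of $V(\mathbf{f}):=|f(\mathcal{M})|$ in two different ways and match them. On the one hand, polarizing the proof of \Cref{lma:Dirichlet} gives the symmetric bilinear identity $\frac{1}{n}\trace(\mathbf{g}^\top L_D(f)\mathbf{h}) = \frac{1}{n}\int_{f(\mathcal{M})}\langle\nabla g,\nabla h\rangle_F\,ds$, valid for any piecewise affine $g,h\colon f(\mathcal{M})\to\mathbb{R}^n$ with vertex matrices $\mathbf{g},\mathbf{h}$. Choosing $g=\mathrm{id}$ so that $\nabla g \equiv I_n$ specializes this to
\[
\trace\!\big(\mathbf{h}^\top L_D(f)\,\mathbf{f}\big) \;=\; \int_{f(\mathcal{M})}\mathrm{div}(h)\,ds.
\]
On the other hand, the perturbation $\mathbf{f}\mapsto \mathbf{f}+t\mathbf{h}$ sends each simplex $f(\sigma)$ to its image under the affine map $x\mapsto x + t\,h(x)$ with Jacobian $I_n + t\,\nabla h\big|_{f(\sigma)}$, so $|f(\sigma)+th(\sigma)| = |f(\sigma)|\det\big(I_n+t\,\nabla h\big|_{f(\sigma)}\big) = |f(\sigma)| + t\,|f(\sigma)|\,\mathrm{div}(h)\big|_{f(\sigma)} + O(t^2)$. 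Summing over $\sigma$ and using the piecewise constancy of $\mathrm{div}(h)$ yields $\frac{d}{dt}V(\mathbf{f}+t\mathbf{h})\big|_{t=0} = \int_{f(\mathcal{M})}\mathrm{div}(h)\,ds$.

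Matching the two expressions gives $\trace(\mathbf{h}^\top \nabla_{\mathbf{f}}V) = \trace(\mathbf{h}^\top L_D(f)\mathbf{f})$ for every admissible perturbation $\mathbf{h}$, hence \eqref{eq:nablaf(M)}. The only nontrivial ingredient is the polarization step, but this will be routine: the derivation in the proof of \Cref{lma:Dirichlet} expands $\|\nabla f\|_F^2$ simplex-wise as $\sum_{i,j}(\mathbf{f}_i\mathbf{f}_j^\top)\langle\nabla\alpha_i,\nabla\alpha_j\rangle$ before combining the coefficients into cotangent weights, and exactly the same manipulation with the mixed product $\mathbf{f}_i\mathbf{g}_j^\top$ in place of $\mathbf{f}_i\mathbf{f}_j^\top$ produces the bilinear form identity used above.
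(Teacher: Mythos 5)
Your proposal is correct, and the two halves should be judged separately. For \eqref{eq:f(M)cot} you do exactly what the paper does: apply \Cref{lma:Dirichlet} to the identity map on the mesh $f(\mathcal{M})$, use $\|\nabla\,\mathrm{id}\|_F^2 = n$, and read off the quadratic form with $L_D(f)$. For \eqref{eq:nablaf(M)}, however, you take a genuinely different route. The paper invokes the per-simplex volume-gradient formula \eqref{eq:citecrane1} from \cite{Crane19}, namely $\nabla_{v_i}|\sigma| = \sum_{[v_i,v_j]\in\mathbb{S}_1(\sigma)}\tilde{w}_{ij}^\sigma v_{ij}$, applies it to each image simplex, and sums to assemble $L_D(f)\mathbf{f}$. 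You instead compute the first variation of $|f(\mathcal{M})|$ under $\mathbf{f}\mapsto\mathbf{f}+t\mathbf{h}$ via $\det(I_n+t\,\nabla h) = 1 + t\,\mathrm{div}(h) + O(t^2)$, and identify $\int_{f(\mathcal{M})}\mathrm{div}(h)\,ds$ with $\trace(\mathbf{h}^\top L_D(f)\mathbf{f})$ by polarizing the cotangent expansion in the proof of \Cref{lma:Dirichlet} and setting one argument equal to the identity. Both arguments are sound; yours is self-contained modulo the (routine) polarization, whereas the paper's leans on the external reference but is shorter. Your version also makes explicit why the answer is $L_D(f)\mathbf{f}$ rather than what naive differentiation of the quadratic form $\frac{1}{n}\trace(\mathbf{f}^\top L_D(f)\mathbf{f})$ would suggest: since $L_D(f)$ itself depends on $\mathbf{f}$, the gradient must be computed from the volume functional directly, which your matching of two independent expressions for the first variation does cleanly. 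The only point worth spelling out if you write this up is that the orientation-preserving hypothesis is what lets you drop the absolute value in $|\det(I_n+t\,\nabla h)|$ for small $t$.
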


\begin{proof}
    By the definition of the $n$-dimensional Dirichlet energy in \eqref{def:ndDirichlet}, we have $E_D(\emph{id})\big|_\mathcal{M} = |\mathcal{M}|$, where $\emph{id}$ is the identity map. Hence, for the discrete $n$-manifold, by using the discrete formula \eqref{eq:discreteED}, it follows that
    \begin{align} \label{eq:EDid=v}
        |f(\mathcal{M})| = E_D(\emph{id})\big|_{f(\mathcal{M})} := \frac{1}{n} \trace(\mathbf{f}^\top L_D(f) \mathbf{f}).
    \end{align}
    This proves the formula \eqref{eq:f(M)cot}. On the other hand, from the formula (1) in \cite{Crane19}, that is,
\begin{align} \label{eq:citecrane1}
    \nabla_{v_{i}} |\sigma|
    = \frac{1}{n(n-1)}\sum_{[v_i,v_j]\in \mathbb{S}_1(\sigma)} |\sigma_{\hat{i}\hat{j}}| \cot\theta_{ij}^\sigma v_{ij}
    = \sum_{[v_i,v_j]\in \mathbb{S}_1(\sigma)} \tilde{w}_{ij}^\sigma v_{ij},
\end{align}
    we have
    \begin{align} \label{eq:nablafsigma}
        \nabla_{\mathbf{f}_{i}} |f(\sigma)|
        = \sum_{[{v}_i,{v}_j]\in \mathbb{S}_1(\sigma)} \tilde{w}_{ij}^\sigma(f) \mathbf{f}_{ij}
        , ~i = 1,2,\cdots,N.
    \end{align}
    It follows that
    \begin{align}
        \nabla_{\mathbf{f}_{i}} |f(\mathcal{M})| &= \sum_{\sigma\in \mathbb{S}_n(\mathcal{M})} \nabla_{\mathbf{f}_{i}} |f(\sigma)|
        = \sum_{\sigma\in \mathbb{S}_n(\mathcal{M})} \sum_{[v_i,v_j] \in \mathbb{S}_1(\sigma)} \tilde{w}_{ij}^\sigma(f) \mathbf{f}_{ij}
        , ~i = 1,2,\cdots,N.
    \end{align}
    Stacking these equations from $1$ to $N$, \eqref{eq:nablaf(M)} is obtained.
\end{proof}

We can see that the definition of the volumetric stretch energy in \eqref{eq:EVcot} and the volume representation in \eqref{eq:f(M)cot} only differ in the cotangent weights in the Laplacian matrices $L_D(f)$ and $L_V(f)$. More specifically, the edge weights differ in a product of the stretch factor in \eqref{eq:stretchfactor}. That is, $w_{ij}^\sigma(f) = \frac{\tilde{w}_{ij}^\sigma(f)}{\sigma_{\mu,f^{-1}}(\sigma)}$. Using this equality, we can provide a simple representation of the volumetric stretch energy functional in \eqref{eq:EVcot}.

\begin{theorem} \label{thm:EVf}
    The volumetric stretch energy functional in \eqref{eq:EVcot} can be formulated as
    \begin{align} \label{eq:EVv}
        E_V(f) = \sum_{\sigma\in \mathbb{S}_n(\mathcal{M}) } \frac{|f(\sigma)|^2}{\mu(\sigma)}.
    \end{align}
\end{theorem}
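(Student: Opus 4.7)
The plan is to unfold the quadratic form $\frac{1}{n}\trace(\mathbf{f}^\top L_V(f)\mathbf{f})$ edge-by-edge using the standard graph-Laplacian identity, then regroup the resulting sum by $n$-simplices and apply Lemma~\ref{lma:volume} locally to each simplex. The key structural observation is already recorded in the paragraph preceding the theorem: the modified cotangent weight differs from the Dirichlet cotangent weight only by the per-simplex factor $\sigma_{\mu,f^{-1}}(\sigma)$, so each $n$-simplex $\sigma$ carries a single scalar $1/\sigma_{\mu,f^{-1}}(\sigma)$ that can be pulled out of its local contribution to $E_V(f)$.

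First, I would use the fact that for any symmetric Laplacian built from edge weights $w_{ij}$ (as in \eqref{eq:LVf}),
\begin{align*}
\trace(\mathbf{f}^\top L_V(f)\mathbf{f}) = \sum_{[v_i,v_j]\in \mathbb{S}_1(\mathcal{M})} w_{ij}(f)\,\|\mathbf{f}_{ij}\|^2.
\end{align*}
Substituting \eqref{eq:wijf} and swapping the order of summation (each $n$-simplex $\sigma$ contributes to every one of its edges), this becomes
\begin{align*}
\trace(\mathbf{f}^\top L_V(f)\mathbf{f}) = \sum_{\sigma\in \mathbb{S}_n(\mathcal{M})} \frac{1}{\sigma_{\mu,f^{-1}}(\sigma)} \sum_{[v_i,v_j]\in \mathbb{S}_1(\sigma)} \tilde{w}_{ij}^\sigma(f)\,\|\mathbf{f}_{ij}\|^2,
\end{align*}
where $\tilde{w}_{ij}^\sigma(f)$ is the per-simplex Dirichlet cotangent weight on $f(\mathcal{M})$.

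Second, I would recognize the inner sum as $n\,|f(\sigma)|$. This is precisely the local content of the computation in the proof of Lemma~\ref{lma:Dirichlet}, specialized to the identity map on the single-simplex complex $\{f(\sigma)\}$: the formulas \eqref{eq:f(M)cot} and the intermediate step $|f(\sigma)| = \frac{1}{n}\sum_{[v_i,v_j]\in \mathbb{S}_1(\sigma)} \tilde{w}_{ij}^\sigma(f)\,\|\mathbf{f}_{ij}\|^2$ give exactly what is needed (applying \eqref{eq:f(M)cot} with $\mathcal{M}$ replaced by the one-simplex complex $\sigma$ and $f$ being the identity recovers $|f(\sigma)|$).

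Finally, combining the two steps and using the defining identity $\sigma_{\mu,f^{-1}}(\sigma) = \mu(\sigma)/|f(\sigma)|$ from \eqref{eq:stretchfactor}, we obtain
\begin{align*}
E_V(f) = \frac{1}{n}\sum_{\sigma\in \mathbb{S}_n(\mathcal{M})} \frac{|f(\sigma)|}{\mu(\sigma)} \cdot n\,|f(\sigma)| = \sum_{\sigma\in \mathbb{S}_n(\mathcal{M})} \frac{|f(\sigma)|^2}{\mu(\sigma)},
\end{align*}
which is \eqref{eq:EVv}. I do not expect a genuine obstacle here; the only delicate point is the bookkeeping of the double sum (edges shared by several $n$-simplices versus per-simplex contributions), which is handled cleanly once one notes that $w_{ij}(f)$ is defined in \eqref{eq:wijf} as an explicit sum over incident $n$-simplices, making the reordering purely formal.
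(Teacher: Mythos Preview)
Your proposal is correct and follows essentially the same route as the paper's own proof: split $E_V(f)$ into per-simplex contributions using $w_{ij}^\sigma(f)=\tilde{w}_{ij}^\sigma(f)/\sigma_{\mu,f^{-1}}(\sigma)$, then invoke the single-simplex instance of \eqref{eq:f(M)cot} (equivalently $E_D(\mathrm{id})|_{f(\sigma)}=|f(\sigma)|$) to identify the inner sum with $n\,|f(\sigma)|$. The only cosmetic difference is that the paper begins with the simplex-wise decomposition $E_V(f)=\sum_\sigma E_V(f)|_\sigma$, whereas you first expand the Laplacian quadratic form edge-by-edge and then regroup; the content is identical.
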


\begin{proof}
    Split $\mathcal{M}$ by its $n$-simplex $\{\sigma\}$ and consider the volumetric stretch energy on $\sigma$, then the volumetric stretch energy can also be splitted as
    \begin{align} \label{eq:splitEV}
        E_V(f) = \sum_{\sigma\in \mathbb{S}_n(\mathcal{M})}E_V(f)\big|_\sigma.
    \end{align}
    From \eqref{eq:EDid=v} we have $E_D(\emph{id})\big|_{f(\sigma)} = |f(\sigma)|$. Hence,

    \begin{align}
        E_V(f)\big|_\sigma &= \frac{1}{n}  \sum_{[v_i,v_j]\in \mathbb{S}_1(\sigma)} w_{ij}^\sigma (f)\|\mathbf{f}_{ij}\|^2
        = \frac{1}{n \sigma_{\mu,f^{-1}}(\sigma)}  \sum_{[v_i,v_j]\in \mathbb{S}_1(\sigma)} \tilde{w}_{ij}^\sigma (f)\|\mathbf{f}_{ij}\|^2\\
        &= \frac{|f(\sigma)|}{n\mu(\sigma)} \trace(\mathbf{f}^\top L_D(f) \mathbf{f})
        = \frac{|f(\sigma)|}{\mu(\sigma)} E_D(id)\big|_{f(\sigma)} = \frac{|f(\sigma)|^2}{\mu(\sigma)}. \label{eq:EVsigma}
    \end{align}
    Combining \eqref{eq:splitEV} with \eqref{eq:EVsigma}, \eqref{eq:EVv} is obtained.
\end{proof}

Theorem \ref{thm:EVf} unifies the continuous and discrete $n$-VSE and reveals that the discrete $n$-VSE is formulated by the volumes of the original simplices $\sigma$ and the image simplices $f(\sigma)$.
Next, we propose the discrete case of \Cref{thm:Sfcontinue} for the lower bound of the $n$-VSE, which is the foundation of our method. Different from \Cref{thm:Sfcontinue}, in the discrete case, the volume of the $n$-ball is not a constant. Hence, we must add a constraint that $f(\mathcal{M})$ is a constant.

\begin{theorem} \label{thm:Sf_discrete}
Let $\mathcal{M}\subseteq \mathbb{R}^n$ be a discrete $n$-manifold to be topologically equivalent to $\mathbb{B}^n$ and $\mu(\cdot)$ be the measure defined in $\mathcal{M}$. Let
\begin{align}
    S_f = \left\{E_V(f)~|~f:\mathcal{M}\to \mathbb{B}^n \text{ is orientation preserving and piecewise affine with } \sum_{\sigma\in \mathbb{S}_n(\mathcal{M})} |f(\sigma)| = C, f(\partial \mathcal{M}) = \mathbb{S}^{n-1} \right\}. \label{set:Sf}
\end{align}
Then we have
\begin{itemize}
    \item[(i)] $\min S_f \geq \frac{C^2}{\sum_{\sigma \in \mathbb{S}_n(\mathcal{M})}\mu(\sigma)}$;
    \item[(ii)] $f^* = \argmin S_f = \frac{C^2}{\sum_{\sigma \in \mathbb{S}_n(\mathcal{M})} \mu(\sigma)}$ if and only if $f^*$ is volume-/mass-preserving, i.e., $\frac{|f^*(\sigma)|}{\sum_{\sigma\in \mathbb{S}_n(\mathcal{M})} |f^*(\sigma)|} = \frac{\mu(\sigma)}{\sum_{\sigma\in \mathbb{S}_n(\mathcal{M})} \mu(\sigma)}$, for each $\sigma \in \mathbb{S}_n(\mathcal{M})$.
\end{itemize}
\end{theorem}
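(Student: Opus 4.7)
The plan is to mirror the continuous proof of \Cref{thm:Sfcontinue}, replacing integrals with sums indexed by $\sigma \in \mathbb{S}_n(\mathcal{M})$. The key enabler is \Cref{thm:EVf}, which collapses the energy into the purely combinatorial form $E_V(f) = \sum_{\sigma} |f(\sigma)|^2/\mu(\sigma)$. Once this is in hand, the theorem becomes a finite-dimensional constrained quadratic minimization problem in the scalars $\xi_\sigma := |f(\sigma)|$.

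First, I would introduce the relaxed feasible set
\begin{align*}
    S_\xi = \left\{ \sum_{\sigma \in \mathbb{S}_n(\mathcal{M})} \frac{\xi_\sigma^2}{\mu(\sigma)} ~\middle|~ \xi_\sigma \geq 0,~ \sum_{\sigma \in \mathbb{S}_n(\mathcal{M})} \xi_\sigma = C \right\}
\end{align*}
and observe, via \Cref{thm:EVf}, that $S_f \subseteq S_\xi$ because any admissible $f$ produces a vector $(\xi_\sigma) = (|f(\sigma)|)$ satisfying the constraints. Hence $\min S_f \geq \min S_\xi$, and it suffices to solve the relaxed problem.

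Second, I would minimize over $S_\xi$ either by Lagrange multipliers (as in the continuous proof) or, more cleanly, by the Cauchy--Schwarz inequality
\begin{align*}
    C^2 = \left( \sum_\sigma \frac{\xi_\sigma}{\sqrt{\mu(\sigma)}} \sqrt{\mu(\sigma)} \right)^2 \leq \left( \sum_\sigma \frac{\xi_\sigma^2}{\mu(\sigma)} \right) \left( \sum_\sigma \mu(\sigma) \right),
\end{align*}
which immediately gives the lower bound in (i). The Lagrange-multiplier route, tracking the continuous argument line-by-line, yields the stationary point $\xi_\sigma^\star = C\mu(\sigma)/\sum_\tau \mu(\tau)$; convexity of the quadratic objective on the affine constraint set promotes this critical point to the global minimizer and produces the same lower bound.

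Third, for part (ii), equality in Cauchy--Schwarz holds iff $\xi_\sigma/\sqrt{\mu(\sigma)}$ is proportional to $\sqrt{\mu(\sigma)}$, i.e.\ $\xi_\sigma = k\,\mu(\sigma)$ for a common constant $k$. Summing over $\sigma$ and using $\sum_\sigma \xi_\sigma = C$ fixes $k = C/\sum_\tau \mu(\tau)$, which is precisely the discrete mass-preservation identity stated in the theorem. Conversely, substituting $|f^*(\sigma)| = C\mu(\sigma)/\sum_\tau \mu(\tau)$ into the formula of \Cref{thm:EVf} gives $E_V(f^*) = C^2/\sum_\tau \mu(\tau)$ by direct computation. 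I do not anticipate a serious obstacle: the one subtle point is only that $S_f \subsetneq S_\xi$ in general (not every tuple of simplex volumes is realizable by an orientation-preserving piecewise affine map with $f(\partial\mathcal{M}) = \mathbb{S}^{n-1}$), but this subset relation is already enough to inherit the lower bound, and for the equality characterization we need only verify the iff for those $f^*$ that actually lie in $S_f$, which the computation above does.
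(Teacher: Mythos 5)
Your proposal is correct and follows essentially the same route as the paper: relax to the simplex volumes $\xi_\sigma$, use $S_f \subseteq S_\xi$ together with Theorem \ref{thm:EVf}, and minimize the resulting quadratic under the sum constraint, with equality characterizing mass-preservation. Your Cauchy--Schwarz argument is just a slightly more elementary way to carry out the minimization step that the paper handles via the KKT/Lagrange-multiplier computation, and your remark that the subset relation suffices for the bound while the equality case need only be checked on maps actually in $S_f$ matches the paper's reasoning.
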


\begin{proof}
    Let $m:=\#\big( \mathbb{S}_n(\mathcal{M}) \big)$. Treating the volumes $|f(\sigma)|$ as variables, \eqref{eq:EVv} is temporarily represented as
    \begin{align}
        E_V(\xi) = \sum_{i = 1}^m \frac{\xi_i^2}{\mu(\sigma_i)}, \quad \xi = (\xi_1,\xi_2,\cdots,\xi_m) \in \mathbb{R}^m_+,
    \end{align}
    For the optimization problem
    \begin{align} \label{opt:Sw}
        \min_\xi S_\xi \equiv \left\{
        E_V(\xi) ~\left|~ \sum_{i = 1}^m \xi_i = C,~ \xi_i\geq 0, ~i = 1,2,\cdots,m
        \right.\right\},
    \end{align}
    its Karush–Kuhn–Tucker (KKT) equation is
    \begin{align} \label{eq:solutionKKT}
        \frac{2\xi_i}{\mu(\sigma_i)} + \lambda = 0,\quad i = 1,2,\cdots,m.
    \end{align}
    By the total volume constraints $\sum_{i = 1}^m \xi_i = C$, we have $\lambda = -\frac{2C}{\sum_{i=1}^m \mu(\sigma_i)}$ and $\xi_i = \frac{C \mu(\sigma_i)}{\sum_{i=1}^m \mu(\sigma_i)}$. That is
    \begin{align} \label{eq:vp-KKT}
        \frac{\xi_i}{\sum_{i=1}^m \xi_i} = \frac{\mu(\sigma_i)}{\sum_{i=1}^m \mu(\sigma_i)},\quad i = 1,2,\cdots,m.
    \end{align}
    Additionally, the problem \eqref{opt:Sw} is convex, whose objective function and feasible region are all convex. Therefore, the solution of KKT equation \eqref{eq:solutionKKT} is the minimizer of the optimization problem \eqref{opt:Sw} and hence $\min S_\xi = \frac{C^2}{\sum_{\sigma \in \mathbb{S}_n(\mathcal{M})} \mu(\sigma)}$. Since $\mathcal{S}_{f} \subseteq \mathcal{S}_{\xi}$, (i) $\min S_f \geq \frac{C^2}{\sum_{\sigma \in \mathbb{S}_n(\mathcal{M})} \mu(\sigma)}$ is obtained. Meanwhile, \eqref{eq:vp-KKT} demonstrates the volume-/mass-preserving property. As a consequence, $f^* = \argmin S_f$ attains the lower bound $\frac{C^2}{\sum_{\sigma \in \mathbb{S}_n(\mathcal{M})} \mu(\sigma)}$ if and only if $\frac{|f^*(\sigma)|}{\sum_{\sigma\in \mathbb{S}_n(\mathcal{M})} |f^*(\sigma)|} = \frac{\mu(\sigma)}{\sum_{\sigma\in \mathbb{S}_n(\mathcal{M})} \mu(\sigma)},i = 1,2,\cdots,m$, which leads to (ii).
\end{proof}

Like the \cref{thm:Sfcontinue} in the continuous case, (i) gives the lower bound of the discrete $n$-VSE under the constraints in \eqref{set:Sf} and (ii) indicates that the discrete $n$-VSE reaches the lower bound if and only if the resulting map is mass-preserving. Identically, the existence and uniqueness of the ideal discrete mass-preserving map from $\mathcal{M}$ to a unit $n$-ball $\mathbb{B}^n$ are not theoretically guaranteed. Therefore, the minimum of $E_V(f)$ cannot always reach the lower bound $\frac{C^2}{\sum_{\sigma \in \mathbb{S}_n(\mathcal{M})}\mu(\sigma)}$. Hence, we define the mass-preserving error of the map $f$ as
\begin{align} \label{def:epsilon_discrette}
    \varepsilon = E_V(f) - \frac{C^2}{\sum_{\sigma \in \mathbb{S}_n(\mathcal{M})}\mu(\sigma)},
\end{align}
and the discrete $\varepsilon$-mass-preserving map.
\begin{definition}
    A map $f:\mathcal{M} \to \mathbb{B}^n$ is a discrete $\varepsilon$-mass-preserving map if $\varepsilon = E_V(f) - \frac{C^2}{\sum_{\sigma \in \mathbb{S}_n(\mathcal{M})}\mu(\sigma)}$.
\end{definition}
For the discrete $\varepsilon$-mass-preserving map, we can also give the measurement of $\varepsilon$.

\begin{theorem} \label{thm:epsilon_discrete}
    Let $f:\mathcal{M} \to \mathbb{B}$ be an orientation preserving and piecewise affine map with $\sum_{\sigma\in \mathbb{S}_n(\mathcal{M})} |f(\sigma)| = C$, $f(\partial \mathcal{M}) = \mathbb{S}^{n-1}$ and
    $\varepsilon = E_V(f) - \frac{C^2}{\sum_{\sigma \in \mathbb{S}_n(\mathcal{M})} \mu(\sigma)}$ and
    \begin{align} \label{def:delta}
        \delta = (\delta_1,\delta_2,\cdots), \quad \delta_i = \frac{\nicefrac{|f(\sigma_i)|}{\sum_{\sigma \in \mathbb{S}_n(\mathcal{M})} |f(\sigma_i)|}}{\nicefrac{\mu(\sigma_i)}{\sum_{\sigma \in \mathbb{S}_n(\mathcal{M})} \mu(\sigma)}} - 1,
    \end{align}
    Then
    \begin{align}
        \mu_\text{min} \frac{C^2}{\left( \sum_{\sigma \in \mathbb{S}_n(\mathcal{M})} \mu(\sigma) \right)^2} \|\delta\|_2^2 \leq \varepsilon \leq \mu_\text{max} \frac{C^2}{\left( \sum_{\sigma \in \mathbb{S}_n(\mathcal{M})} \mu(\sigma) \right)^2} \|\delta\|_2^2. \label{ineq:epsilon_discrete}
    \end{align}
    where $\mu_\text{min}$ and $\mu_\text{max}$ are the minimum and maximum of $\mu(\sigma)$ among all $\sigma \in \mathbb{S}_n(\mathcal{M})$, respectively.
\end{theorem}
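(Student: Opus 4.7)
The plan is to mimic the structure of the continuous-case proof of \Cref{thm:epsilon_continue} on the discrete side, the only new ingredient being that since $\mu(\sigma)$ is piecewise constant, the weighted $L^2$ norm must be squeezed between $\mu_\text{min}$ and $\mu_\text{max}$ times the plain $\ell^2$ norm to produce the two-sided inequality. I will set $M := \sum_{\sigma \in \mathbb{S}_n(\mathcal{M})} \mu(\sigma)$ and introduce normalized versions of $|f(\sigma_i)|$ and $\mu(\sigma_i)$, namely $\tilde{\xi}_i := |f(\sigma_i)|/C$ and $\tilde{\mu}_i := \mu(\sigma_i)/M$, so that $\sum_i \tilde{\xi}_i = \sum_i \tilde{\mu}_i = 1$ and $\delta_i = \tilde{\xi}_i/\tilde{\mu}_i - 1$ by the definition in \eqref{def:delta}.

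Next I will rewrite $E_V(f)$ from \Cref{thm:EVf} in terms of these normalized quantities, obtaining $E_V(f) = (C^2/M)\sum_i \tilde{\xi}_i^2/\tilde{\mu}_i$. Substituting $\tilde{\xi}_i = \tilde{\mu}_i(1+\delta_i)$ gives $\tilde{\xi}_i^2/\tilde{\mu}_i = \tilde{\mu}_i(1+\delta_i)^2$, and so
\begin{align*}
\varepsilon = \frac{C^2}{M}\sum_i \tilde{\mu}_i\bigl[(1+\delta_i)^2 - 1\bigr] = \frac{C^2}{M}\sum_i \tilde{\mu}_i\bigl[2\delta_i + \delta_i^2\bigr].
\end{align*}
The key cancellation is the identity $\sum_i \tilde{\mu}_i \delta_i = \sum_i \tilde{\xi}_i - \sum_i \tilde{\mu}_i = 0$, which follows from the constraint $\sum_\sigma |f(\sigma)| = C$ and the definition of $M$. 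This collapses the expression to
\begin{align*}
\varepsilon = \frac{C^2}{M}\sum_i \tilde{\mu}_i \delta_i^2 = \frac{C^2}{M^2}\sum_{i} \mu(\sigma_i)\,\delta_i^2.
\end{align*}

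Finally I will apply the elementary estimate $\mu_\text{min}\|\delta\|_2^2 \le \sum_i \mu(\sigma_i)\delta_i^2 \le \mu_\text{max}\|\delta\|_2^2$, which yields exactly the two-sided bound \eqref{ineq:epsilon_discrete}. There is no substantial obstacle here; the only point that requires care is verifying that the linear-in-$\delta$ term really vanishes, which rests on the boundary/volume constraint $\sum_\sigma |f(\sigma)| = C$ imposed in \eqref{set:Sf}. The two-sided (rather than equality) character of \eqref{ineq:epsilon_discrete}, contrasted with the equality in \Cref{thm:epsilon_continue}, is an artefact of $\mu$ being piecewise constant on simplices rather than absorbed into a continuous measure, and the bound is tight when $\mu(\sigma)$ is constant across all $n$-simplices.
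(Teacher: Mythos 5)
Your proposal is correct and follows essentially the same route as the paper's proof: normalize to $\tilde{\xi}_i$ and $\tilde{\mu}_i$ summing to one, use $\tilde{\xi}_i = \tilde{\mu}_i(1+\delta_i)$ so that the linear term cancels and $\varepsilon = \frac{C^2}{M^2}\sum_i \mu(\sigma_i)\delta_i^2$, then squeeze with $\mu_{\min}$ and $\mu_{\max}$. Your explicit verification that $\sum_i \tilde{\mu}_i\delta_i = 0$ is a nice touch, since the paper leaves that cancellation implicit.
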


\begin{proof}
Suppose $m:= \#(\mathbb{S}_n(\mathcal{M}))$. Let
\begin{align}
&\tilde{\xi}_i = \frac{|f(\sigma_i)|}{\sum_{i = 1}^m |f(\sigma_i)|},\quad
\tilde{\mu}_i = \frac{\mu(\sigma_i)}{\sum_{i = 1}^m\mu(\sigma_i)},\\
& \tilde{\mu}_{\text{max}} := \max_i \tilde{\mu}_{i} = \frac{\mu_{\text{max}}}{\sum_{i = 1}^m \mu(\sigma_i)}, \label{eq:tildemumax_discrete}\\
& \tilde{\mu}_{\text{min}} := \max_i \tilde{\mu}_{i} = \frac{\mu_{\text{min}}}{\sum_{i = 1}^m \mu(\sigma_i)}. \label{eq:tildemumin_discrete}
\end{align}
Then $\delta_i = {\tilde{\xi}_i}/{\tilde{\mu}_i} -1$ and
\begin{align}
    \sum_{i = 1}^m \tilde{\xi}_i = \sum_{i = 1}^m \tilde{\mu}_i = 1.
\end{align}
We have
\begin{align}
    \varepsilon = & E_V(f) - \frac{C^2}{\sum_{i = 1}^m \mu(\sigma_i)}
    = \frac{C^2}{\sum_{i = 1}^m\mu(\sigma_i)}  \left[ \sum_{i = 1}^m
    \tilde{\mu}_i(\delta_i + 1)^2 - 1 \right]\\
    = & \frac{C^2}{\sum_{i = 1}^m\mu(\sigma_i)} \sum_{i = 1}^m
    \tilde{\mu}_i[(\delta_i + 1)^2 - 1]
    = \frac{C^2}{\sum_{i = 1}^m\mu(\sigma_i)} \sum_{i = 1}^m
    \tilde{\mu}_i \delta_i^2.  \label{eq:epsilon_proof1_discrete}
\end{align}
Since $\tilde{\mu}_i > 0$, we have
\begin{subequations}
\begin{equation} \label{eq:maxepsilon_discrete}
\sum_{i = 1}^m
\tilde{\mu}_i\delta_i^2 \leq \tilde{\mu}_{\text{max}} \sum_{i = 1}^m \delta_i^2 = \tilde{\mu}_{\text{max}} \|\delta\|_2^2,
\end{equation}
\begin{equation} \label{eq:minepsilon_discrete}
\sum_{i = 1}^m \tilde{\mu}_i\delta_i^2 \geq \tilde{\mu}_{\text{min}} \sum_{i = 1}^m \delta_i^2 = \tilde{\mu}_{\text{min}} \|\delta\|_2^2.
\end{equation}
\end{subequations}
Plugging \eqref{eq:epsilon_proof1_discrete}, \eqref{eq:tildemumax_discrete} and \eqref{eq:tildemumin_discrete} into \eqref{eq:maxepsilon_discrete} and \eqref{eq:minepsilon_discrete}, the inequality \eqref{ineq:epsilon_discrete} is obtained.
\end{proof}

As like the continuous case, \Cref{thm:epsilon_discrete} shows that we can use $\varepsilon$ to measure the mass-preservation of the map. Meanwhile, minimizing $S_f$ in \eqref{set:Sf} is a feasible method to find the $\varepsilon$-mass-preserving map.

\section{Volumetric Stretch Energy Minimization}

We aim to achieve the $n$-dimensional volumetric stretch energy minimization ($n$-VSEM), specifically, minimize $S_f$ in \eqref{set:Sf} to find the $\varepsilon$-mass-preserving map from the $n$-manifold $\mathcal{M}$ to the unit $n$-ball $\mathbb{B}^n$. We rewrite this problem as following.
\begin{align} \label{opt:ball_total}
\begin{array}{l@{}c@{\quad}l}
    & \min_\mathbf{f} & E_V(f):=\frac{1}{n} \trace(\mathbf{f}^\top L_V(f) \mathbf{f})\\
    & \text{s.t.} & \sum_{\sigma\in \mathbb{S}_n(\mathcal{M})} |f(\sigma)| = C\\
    && f(\partial \mathcal{M}) = \mathcal{S}^{n-1}
\end{array}
\end{align}
This is a large scale and nonlinear optimization problem. We divide this problem \eqref{opt:ball_total} into boundary and interior parts instead.
Let
\begin{align}
    \mathtt{B} = \{ i~|~ v_i \in \mathbb{S}_0(\partial \mathcal{M})\}, \quad \mathtt{I} = \{ i~|~ v_i \in \mathbb{S}_0(\mathcal{M}) \setminus \mathbb{S}_0(\partial \mathcal{M})\},
\end{align}
and partition $\mathbf{f}$ according to the boundary index $\mathtt{B}$ and interior index $\mathtt{I}$ as $\mathbf{f} = [\mathbf{f}_\mathtt{B}^\top, \mathbf{f}_\mathtt{I}^\top]^\top$.
The boundary and interior subproblems are as following.
\begin{itemize}
    \item[1)] Boundary subproblem: given a measure $\mu'$ defined in $\partial \mathcal{M}$, find the $\varepsilon$-mass-preserving map $g:\partial \mathcal{M} \to \mathcal{S}^{n-1}$ by solving
\begin{align} \label{opt:sphere}
\begin{array}{l@{}c@{\quad}l}
    & \min_\mathbf{g} & E_V(g):=\frac{1}{n-1} \trace(\mathbf{g}^\top L_V(g) \mathbf{g})\\
    & \text{s.t.} & \sum_{\sigma\in \mathbb{S}_{n-1}(\partial \mathcal{M})} |g(\sigma)| = C'\\
    && \|\mathbf{g}_i\|_2^2 = 1, v_i\in \mathbb{S}_0(\partial \mathcal{M})
\end{array}
\end{align}
    \item[2)] Interior subproblem: fix the boundary map $\mathbf{f}_\mathtt{B} = \mathbf{g}$ and find the $\varepsilon$-mass-preserving map $f:\mathcal{M} \to \mathbb{B}^n$ by solving
\begin{align} \label{opt:ball}
    \min_{\mathbf{f}_\mathtt{I}} \quad E_V(f):=\frac{1}{n} \trace(\mathbf{f}^\top L_V(f) \mathbf{f}).
\end{align}
\end{itemize}

In the boundary subproblem \eqref{opt:sphere}, the constraint $f(\partial \mathcal{M}) = \mathcal{S}^{n-1}$ is replaced by a weaken constraint $\|\mathbf{g}_i\|^2 = 1$, $v_i \in \partial \mathcal{M}$. This subproblem is a relatively low scale problem compared with \eqref{opt:ball_total} since the number of the boundary vertices is much less than that of whole vertices generally. The boundary subproblem \eqref{opt:sphere} can be treated as an independent problem for the spherical $\varepsilon$-mass-preserving parameterizations from a genus-$0$ $n$-manifold to the unit $n$-hypersphere. This problem is to obtain a boundary map for the computation of the interior map. In the interior subproblem \eqref{opt:ball}, the boundary vertices are fixed. By the divergence theorem, the volume of the $n$-ball is determined by the enclosed surface. Hence, the volume of $f(\mathcal{M})$ is also fixed, that is, the volume sum constraint $\sum_{\sigma\in \mathbb{S}_n(\mathcal{M})} |f(\sigma)| = C$ in \eqref{opt:ball_total} is directed satisfied. Hence, we face to an unconstrained optimization problem in the subproblem \eqref{opt:ball}. As a consequence, we
divide the large scale and nonlinear problem \eqref{opt:ball_total} into a low scale constrained subproblem and an high scale unconstrained subproblem. By the \Cref{thm:Sf_discrete}, the resulting map of the subproblems is the ideal $\varepsilon$-mass-preserving map. The mass-preserving error $\varepsilon$ depends on the boundary subproblem \eqref{opt:sphere}.

\begin{remark}
    Generally, the orientation preserving condition is satisfied for the solution to the subproblems \eqref{opt:sphere} and \eqref{opt:ball} in numerical experiments. Even if there are overlap $n$-simplices, the number of them is little. In this case, a convex combination postprocess can eliminate the overlap $n$-simplices with a tiny mass-preserving loss. Therefore, we ignore the orientation preserving condition in the computation.
\end{remark}

\subsection{Initial map for spherical parameterization}

We first introduce the $n$-dimensional Dirac map inspired by \cite{SHSA00}. Then the initial map for the boundary subproblem \eqref{opt:sphere} is obtained by a north-south alternating iteration. The Dirac map in \cite{SHSA00} is to compute spherical conformal parameterization of the genus-$0$ closed surface by solving an inhomogeneous Laplace-Beltrami equation. We consider the generalized $n-1$ dimensional equation
\begin{align} \label{eq:LBdelta}
    -\Delta_{\partial \mathcal{M}} g = \left(\frac{\partial}{\partial u_1},\frac{\partial}{\partial u_2},\cdots,\frac{\partial}{\partial u_{n-1}}\right) \delta_p
\end{align}
where $p$ is a point on $\partial \mathcal{M}$, $\delta_p$ is Dirac delta function on $p$, $(u_1,u_2,\cdots,u_{n-1})$ is a local orthogonal coordinate on the neighborhood of $p$ and $\frac{\partial}{\partial u_i}$ is the directional differential with respect to $u_i$. The weak formulation of \eqref{eq:LBdelta} for the test function $h$ is
    \begin{align} \label{eq:LBweak}
    \int_{\partial \mathcal{M}}\big\langle\nabla_{\partial \mathcal{M}} g,\nabla_{\partial \mathcal{M}} h\big\rangle ds
    = \Big\langle \left(\frac{\partial}{\partial u_1},\frac{\partial}{\partial u_2},\cdots,\frac{\partial}{\partial u_{n-1}}\right),h \Big\rangle\Big|_p,
\end{align}

Here $\partial \mathcal{M}$ is a simplical $(n-1)$-complex. $g$ and $h$ are piecewise affine map induced by $\mathbf{g}$ and $\mathbf{h}$, respectively. (i) For the left hand term, similar to the proof of \Cref{lma:Dirichlet}, it can be easily verified that
\begin{align} \label{eq:LBweakleft}
    \int_{\partial \mathcal{M}}\big\langle\nabla_{\partial \mathcal{M}} g,\nabla_{\partial \mathcal{M}} h\big\rangle ds = \trace(\mathbf{h}^\top L_D \mathbf{g}),
\end{align}
where $L_D$ is defined in \eqref{eq:LD}.

(ii) For the right hand term, suppose that $p$ is in $\tau_p = [v_0,v_1,\cdots,v_{n-1}]$. Then we need to find an orthogonal coordinate in $\tau_p$, which can be constructed by the QR decomposition of matrix
\begin{align}
[v_{10}^\top,v_{20}^\top,\cdots,v_{(n-1)0}^\top] = QR, \label{eq:qrdecomp}
\end{align}
where $Q = [q_1,q_2,\cdots,q_{n-1}] \in \mathbb{R}^{n\times (n-1)}$ is the required orthogonal coordinates. Hence, the right term becomes
\begin{align} \label{eq:LBweakright}
    \Big\langle \left(\frac{\partial}{\partial u_1},\frac{\partial}{\partial u_2},\cdots,\frac{\partial}{\partial u_{n-1}}\right),h \Big\rangle\Big|_p = \sum_{i = 1}^{n-1} \left.\frac{\partial h^{(i)}}{\partial q_i}\right|_p =
    \trace(\nabla h|_p Q).
\end{align}
Combining \eqref{eq:LBweakleft} and \eqref{eq:LBweakright}, the weak formulation \eqref{eq:LBweak} becomes
\begin{align}
    \trace(\mathbf{h}^\top L_D\mathbf{g}) = \trace(\nabla h|_p Q),
\end{align}
for the test function $h$. Letting
\begin{align}
    &h(v) = \begin{cases}
        e_j^\top, & v = v_i,\\
        0, & v \neq v_i, v\in \mathbb{S}_0(\partial \mathcal{M}),
    \end{cases}
\end{align}
for $i = 1,\cdots,N$ and $j = 1,2,\cdots,n$, we have
\begin{align}
    L_D\mathbf{g} = \mathbf{b}, \label{eq:dirac_discrete}
\end{align}
where
\begin{align} \label{eq:b}
    [\mathbf{b}]_i = \begin{cases}
        0,& v_i \notin \mathbb{S}_0(\tau_p), \\
        \nabla \alpha_i Q, & v_i \in \mathbb{S}_0(\tau_p).
    \end{cases}
\end{align}
By the barycentre coodinates formula in \eqref{eq:NablaLambda}, letting $R = [r_1,r_2,\cdots,r_{n-1}]$ in \eqref{eq:qrdecomp} and $r_0 = 0$, we have
\begin{align} \label{eq:b_detail}
\nabla \alpha_i Q = \frac{1}{|\tau_p|}\sum_{j \neq i} \tilde{w}_{ij}^{\tau_p} v_{ij}Q =
\frac{1}{|\tau_p|} \sum_{j \neq i} \tilde{w}_{ij}^{\tau_p} (r_i - r_j)^\top.
\end{align}

To solve the linear system \eqref{eq:dirac_discrete} under the spherical constraint, we introduce the $(n-1)$-dimensional stereographic projection $\Pi:\mathcal{S}^{n-1} \to \overline{\mathbb{R}}^{n-1}$, defined as
\begin{align}
    \Pi(\mathbf{f}_i) = \left[ \frac{\mathbf{f}_i^{1}}{1-\mathbf{f}_i^{n}}, \frac{\mathbf{f}_i^{2}}{1-\mathbf{f}_i^{n}}, \cdots, \frac{\mathbf{f}_i^{n-1}}{1-\mathbf{f}_i^{n}} \right],
\end{align}
for $\mathbf{f}_i = [\mathbf{f}_i^1, \mathbf{f}_i^2, \cdots, \mathbf{f}_i^n]$. Let $h = \Pi \circ g$ and $\mathbf{h} = \Pi(\mathbf{g})$. Consider the linear system
\begin{align}
    L_D \mathbf{h} = \mathbf{b}.
\end{align}
The Laplacian matrix $L_D$ is singular, which has a eigenpair $(0,\mathbf{1})$, which means that it is unique up to translation. Hence, we can fix a point $\mathbf{h}_i$ to solve it. Since the stereographic projection is a conformal map, the map $g := \Pi^{-1}\circ h$ is the solution of \eqref{eq:dirac_discrete}. \Cref{alg:Dirac} shows the computation of the $(n-1)$-dimensional spherical Dirac map.

\begin{algorithm}[h]
\caption{Spherical parameterization by Dirac map}
\begin{algorithmic}[1] \label{alg:Dirac}
    \REQUIRE $(n-1)$-simplicial complex $\partial \mathcal{M}$ topologically equivalent to $\mathcal{S}^{n-1}$.
    \ENSURE A spherical Dirac parameterization $g: \partial \mathcal{M}\to \mathcal{S}^{n-1}$.
    \STATE Select the most regular $(n-1)$-simplex as $\tau_p$ containing the Dirac point.
    \STATE Construct the right hand term $\mathbf{b}$ as \eqref{eq:b} and \eqref{eq:b_detail}.
    \STATE Let $N$ be the number of vertices. Fix a vertex $\mathbf{h}_i = \mathbf{0}$ and set $\hat{\mathtt{I}} = \{1,2,\cdots,N\}\setminus \{i\}$.
    \STATE Compute $\mathbf{h}$ by solving the linear system
    \begin{align}
        [L_D]_{\hat{\mathtt{I}}\hat{\mathtt{I}}} \mathbf{h}_{\hat{\mathtt{I}}} = \mathbf{b}_{\hat{\mathtt{I}}},
    \end{align}
    where $L_D$ is defined as \eqref{eq:LD}.
    \STATE Perform the centralization $\mathbf{h} \gets \mathbf{h} - \frac{1}{N}\mathbf{1}_N \mathbf{1}_N^\top \mathbf{h}$.
    \STATE Obtain the spherical Dirac map by the inverse stereographic projection $\mathbf{g} = \Pi^{-1}(\mathbf{h})$.
\end{algorithmic}
\end{algorithm}

Inspired from SEM algorithm in \cite{MHTL19}, we can apply the stereographic projection to design a north-south alternating iteration algorithm in \ref{alg:SEM}. The resulting map of this algorithm has small $\varepsilon$. Hence, it can provide an $\varepsilon$-mass-preserving map as the initial map and the sum of the volume $C'$ for our proposed algorithm.

\begin{algorithm}[H]
\caption{SEM for spherical mass-preserving parameterization}
\begin{algorithmic}[1] \label{alg:SEM}
    \REQUIRE $(n-1)$-simplicial complex $\partial \mathcal{M}$ topologically equivalent to $\mathcal{S}^{n-1}$, interior radius $r$, and a tolerance $tol$.
    \ENSURE A $\varepsilon$-mass-preserving parameterization $g:\partial \mathcal{M}\to \mathbb{S}^{n-1}$ induced by $\mathbf{g}$.
    \STATE Let $m = \#\mathbb{S}_0(\partial \mathcal{M})$.
    \STATE Compute a Dirac map $g$ by \Cref{alg:Dirac}.
    \STATE Compute $\mathbf{h}_i = \Pi (\mathbf{g}_i)$, $i = 1,2,\cdots,m$.
    \STATE Let $E_{\text{old}} \gets E_V(g)$ and $\delta E = +\infty$.
    \WHILE{$\delta E > tol$}
    \STATE Update $L \gets L_V(g)$ as in \eqref{eq:LVf}.
    \STATE $\mathbf{h} \gets \diag (|\mathbf{h}|^{-2}) \mathbf{h}$.
    \STATE Set $\mathtt{I} = \{i~|~ |\mathbf{h}_i| < r\}$, $\mathtt{B} = \{1,2,\cdots,m\}\setminus \mathtt{I}$.
    \STATE Update $\mathbf{h}$ by solving $[ L_S ]_{\mathtt{I}\mathtt{I}} \mathbf{h}_{\mathtt{I}} = -[ L_S ]_{\mathtt{I}\mathtt{B}} \mathbf{h}_{\mathtt{B}}$.
    \STATE Compute $\mathbf{g}_i = \Pi^{-1}(\mathbf{h}_i)$, $i = 1,2,\cdots, m$.
    \STATE Let $E_{\text{new}} \gets E_V(g)$ and $\delta E = E_{\text{old}} - E_{\text{new}}$.
    \ENDWHILE
\end{algorithmic}
\end{algorithm}

\subsection{Spherical $\varepsilon$-mass-preserving parameterization}

The spherical $\varepsilon$-mass-preserving parameterization is achieved by solving the spherical subproblem \eqref{opt:sphere} as following,
\begin{align} 
\begin{array}{l@{}c@{\quad}l}
    & \min_\mathbf{g} & E_V(g):=\frac{1}{n-1} \trace(\mathbf{g}^\top L_V(g) \mathbf{g})\\
    & \text{s.t.} & \sum_{\sigma\in \mathbb{S}_{n-1}(\partial \mathcal{M})} |g(\sigma)| = C'\\
    && \|\mathbf{g}_i\|_2^2 = 1, v_i\in \mathbb{S}_0(\partial \mathcal{M})
\end{array}
\end{align}
We adopt the Newton method to this problem. We first present a theorem for the gradient of the $n$-VSE.
\begin{theorem} \label{thm:gradient}
The gradient of volumetric stretch energy functional in \eqref{eq:EVcot} can be formulated as
\begin{align} \label{eq:GradE}
    \nabla E_V(f) = 2 L_V(f) \mathbf{f}.
\end{align}
\end{theorem}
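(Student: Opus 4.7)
The crucial observation is that although $L_V(f)$ itself depends on $\mathbf{f}$ through both the cotangent weights and the stretch factors, the energy admits the much simpler representation
\begin{equation*}
E_V(f) = \sum_{\sigma \in \mathbb{S}_n(\mathcal{M})} \frac{|f(\sigma)|^2}{\mu(\sigma)}
\end{equation*}
established in \Cref{thm:EVf}. Differentiating this form sidesteps the apparent headache of having to differentiate the weights $w_{ij}(f)$ themselves. My plan is to compute $\nabla_{\mathbf{f}_i} E_V(f)$ from this representation and then reassemble the result into the matrix $L_V(f)\mathbf{f}$.

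First, since $\mu(\sigma)$ is independent of $\mathbf{f}$, the chain rule gives
\begin{equation*}
\nabla_{\mathbf{f}_i} E_V(f) = \sum_{\sigma \ni v_i} \frac{2|f(\sigma)|}{\mu(\sigma)} \nabla_{\mathbf{f}_i} |f(\sigma)|.
\end{equation*}
By \eqref{eq:nablafsigma} in \Cref{lma:volume},
\begin{equation*}
\nabla_{\mathbf{f}_i} |f(\sigma)| = \sum_{[v_i,v_j]\in\mathbb{S}_1(\sigma)} \tilde{w}_{ij}^\sigma(f)\,\mathbf{f}_{ij}.
\end{equation*}
Substituting and using the identity $\tilde{w}_{ij}^\sigma(f)/\sigma_{\mu,f^{-1}}(\sigma) = w_{ij}^\sigma(f)$ noted right before \Cref{thm:EVf} (together with $|f(\sigma)|/\mu(\sigma) = 1/\sigma_{\mu,f^{-1}}(\sigma)$) turns the sum into
\begin{equation*}
\nabla_{\mathbf{f}_i} E_V(f) = 2\sum_{\sigma\ni v_i} \sum_{[v_i,v_j]\in\mathbb{S}_1(\sigma)} w_{ij}^\sigma(f)\,\mathbf{f}_{ij}.
\end{equation*}

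Finally, I swap the order of summation: grouping contributions by edge rather than by simplex produces $\sum_j w_{ij}(f)(\mathbf{f}_i - \mathbf{f}_j)$, where $w_{ij}(f) = \sum_{[v_i,v_j]\in\mathbb{S}_1(\sigma)} w_{ij}^\sigma(f)$ matches the weights in \eqref{eq:wijf}. Stacking these row-wise and recognizing the off-diagonal/diagonal structure in \eqref{eq:LVf} identifies the result with $2\,[L_V(f)\mathbf{f}]_i$, giving the desired formula.

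The only subtle point, and what might look like the main obstacle, is the tacit cancellation of the $\mathbf{f}$-dependence of $L_V(f)$: had one naively differentiated $\tfrac{1}{n}\trace(\mathbf{f}^\top L_V(f)\mathbf{f})$ through the quadratic form, one would have produced extra terms involving $\partial L_V/\partial \mathbf{f}$. The representation from \Cref{thm:EVf} resolves this by packaging the $f$-dependent weights into the combination $|f(\sigma)|^2/\mu(\sigma)$, whose gradient conveniently regenerates exactly the weight structure of $L_V(f)$.
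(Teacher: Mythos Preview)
Your proof is correct and follows essentially the same route as the paper: both start from the representation $E_V(f)=\sum_\sigma |f(\sigma)|^2/\mu(\sigma)$ of \Cref{thm:EVf}, apply the chain rule, insert the simplex-volume gradient \eqref{eq:nablafsigma}, convert $\tilde{w}_{ij}^\sigma(f)$ to $w_{ij}^\sigma(f)$ via the stretch factor, and then regroup by edges to recognize $2L_V(f)\mathbf{f}$. Your closing remark about why the $\mathbf{f}$-dependence of $L_V(f)$ causes no extra terms is a helpful clarification but not a departure from the paper's argument.
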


\begin{proof}
By \Cref{thm:EVf}, we have
\begin{align}
    E_V(f) =  \sum_{\sigma\in\mathbb{S}_n(\mathcal{M})} \frac{|f(\sigma)|^2}{\mu(\sigma)}.
\end{align}
Using the chain rule, we can represent its gradient as
\begin{align}
    \nabla E_V(f) = 2 \sum_{\sigma\in\mathbb{S}_n(\mathcal{M})} \frac{|f(\sigma)|}{\mu(\sigma)} \nabla |f(\sigma)|.
\end{align}
Plugging the gradient formula of $|f(\sigma)|$ in \eqref{eq:nablafsigma} into it, we have
\begin{align}
    \frac{\partial}{\partial \mathbf{f}_i} E_V(f) = &2 \sum_{\sigma\in\mathbb{S}_n(\mathcal{M})}  \sum_{[v_i,v_j] \in \mathbb{S}_1(\sigma)} \left[\frac{1}{n(n-1)}\frac{|f(\sigma_{-ij})|}{\sigma_{\mu,f^{-1}}(\sigma)} \cot\theta_{ij}^{\sigma}(f) \right] \mathbf{f}_{ij}\\
     = & 2 \sum_{\sigma\in\mathbb{S}_n(\mathcal{M})} \sum_{[v_i,v_j] \in \mathbb{S}_1(\sigma)} w_{ij}^{\sigma}(f) (\mathbf{f}_i - \mathbf{f}_j)
     = 2 \sum_{[v_{i},v_j] \in \mathbb{S}_1(\mathcal{M})} w_{ij}(f) (\mathbf{f}_i - \mathbf{f}_j)\\
     =& 2 e_i^\top L_V(f) \mathbf{f},
\end{align}
completing the proof.
\end{proof}
The gradient formula in \eqref{eq:GradE} also holds for $\mathcal{S}^{n-1}$ without loss of generality. Then, we consider the Lagrange multiplier,
\begin{align} \label{eq:lagrange}
    L(\mathbf{g},\lambda,\boldsymbol{s}) = E_V(g) + \lambda (\sum_{\sigma\in \mathbb{S}_{n-1}(\partial \mathcal{M})} |g(\sigma)| - C') + \frac{1}{2}\sum_i \boldsymbol{s}_i (\|\mathbf{g}_i\|_2^2 - 1).
\end{align}
From \Cref{lma:volume} and \Cref{thm:gradient}, the Karush–Kuhn–Tucker equations are
\begin{align}
    &2L_V(g)\mathbf{g} + \lambda L_D(g) \mathbf{g} + \diag(\boldsymbol{s}) \mathbf{g} = 0,\\
    &\sum_{\sigma\in \mathbb{S}_{n-1}(\partial \mathcal{M})} |g(\sigma)| - C' = 0,\\
    & \frac{1}{2}(\|\mathbf{g}\|^2 - 1) = 0,
\end{align}
where $\|\mathbf{g}\|$ means the column vector with the $i$-th entry being $\|\mathbf{g}_i\|_2$. For this primal-dual problem, the Newton step is obtained by solving the linear system
\begin{align} \label{eq:newton}
\begin{bmatrix}
    \nabla_{\text{vec}(\mathbf{g})}^2 L & \text{vec}(L_D(g) \mathbf{g}) & \text{cdiag}(\mathbf{g}) \\
    \text{vec}(L_D(g) \mathbf{g})^\top & 0 & 0 \\
    \text{cdiag}(\mathbf{g})^\top & 0 & 0
\end{bmatrix}
\begin{bmatrix}
    \Delta \text{vec}(\mathbf{g}) \\ \Delta \lambda \\ \Delta \boldsymbol{s}
\end{bmatrix}
= -
\begin{bmatrix}
    \text{vec}(2L_V(g)\mathbf{g} + \lambda L_D(g) \mathbf{g} + \mathbf{g} \diag(\boldsymbol{s})) \\
    \sum_{\sigma\in \mathbb{S}_{n-1}(\partial \mathcal{M})} |g(\sigma)| - C'\\
    \|\mathbf{g}\|^2 - 1
\end{bmatrix},
\end{align}
in which $\text{vec}$ means the vectorization of a matrix, $\text{cdiag}(\mathbf{g})$ is a block diagonal matrix with its $i$-th block being $\mathbf{g}^{i}$, and $\nabla_{\text{vec}(\mathbf{g})}^2 L$ is the Hessian matrix with respect to ${\text{vec}(\mathbf{g})}$ of the Lagrange multiplier \eqref{eq:lagrange}.
It can be verified that the Hessian matrix $\nabla_\mathbf{g}^2 L$ is of identical sparsity as $\mathbf{1}_{(n-1)\times(n-1)}
\otimes L_D$.
Here we can estimate it by finite difference method.
Hence, we propose the \Cref{alg:SMP} for the $(n-1)$-spherical mass-preserving parameterization.
\begin{algorithm}[h]
\caption{$(n-1)$-VSEM for the $(n-1)$-spherical mass-preserving parameterization}
\begin{algorithmic}[1] \label{alg:SMP}
    \REQUIRE $(n-1)$-simplicial complex $\partial \mathcal{M}$ topologically equivalent to $\mathcal{S}^{n-1}$ with the measure $\mu$, tolerance $tol$.
    \ENSURE A spherical mass-preserving parameterization $g: \partial \mathcal{M}\to \mathcal{S}^{n-1}$.
    \STATE Compute a Dirac map $g$ by \Cref{alg:SEM} and set $C' = |g(\partial \mathcal{M})|$.
    \STATE Let $E_{\text{old}} = E_V(g)$ and $\delta E = +\infty$.
    \WHILE{$\delta E< tol$}
    \STATE Compute the Newton step by solving the linear system \eqref{eq:newton}.
    \STATE Find a step size $\alpha$ by the linear search and update
    \begin{align}
        \mathbf{g} \gets \mathbf{g} + \alpha \Delta \mathbf{g},\quad
        \lambda \gets \lambda + \alpha \Delta \lambda,\quad
        \boldsymbol{s} \gets \boldsymbol{s} + \alpha \Delta \boldsymbol{s}.
    \end{align}
    \STATE Let $E_{\text{new}} \gets E_V(g)$ with $g$ induced by $\mathbf{g}$ and $\delta E = E_{\text{old}} - E_{\text{new}}$. Set $E_{\text{old}} \gets E_{\text{new}}$.
    \ENDWHILE
\end{algorithmic}
\end{algorithm}

\subsection{Ball $\varepsilon$-mass-preserving parameterization}

The ball $\varepsilon$-mass-preserving parameterization need a fixed boundary map according to the above discussion.
However, a sliver $n$-manifold may lead to a boundary map with low quality in the numerical perspective. To obtain a better boundary map, we stretch the boundary vertices $V_\mathtt{B} := [v_1^\top, v_2^\top, \cdots, v_m^\top]^\top$ along the principal axes, such that the $n$-manifold becomes a ball-like shape. Generally, this stretched $n$-manifold may result in a better boundary map. Moreover, this stretch transformation is also a mass-preserving map.
Specifically, the stretch transformation
is obtained by solving the eigenvalue problem
\begin{align}
    \tilde{V}_\mathtt{B}^\top \tilde{V}_\mathtt{B}X = X\Lambda,
\end{align}
where $\tilde{V}_\mathtt{B} = (I - \frac{1}{m}\mathbf{1}\mathbf{1}^\top)V_\mathtt{B}$ with $m = \# \mathtt{B}$, $X = [x_1,\cdots,x_n]$ is the directions of the principal axes and $\Lambda = [\lambda_1,\cdots,\lambda_n]$ are the square of length of the principal axes. Then $\tilde{V}_\mathtt{B}X \Lambda^{-\frac{1}{2}}$ are the vertices of the ball-like manifold. The solution of this problem can be calculated by the singular value decomposition of $\tilde{V}_\mathtt{B}$. We can verify that the left singular vector $U = \tilde{V}_\mathtt{B}X \Lambda^{-\frac{1}{2}}$.

After the computation of the boundary map $f_\mathtt{B} = \mathbf{g}$, we focus on the ball subproblem \eqref{opt:ball} as following,
\begin{align} \label{opt:ball2}
    \min_{\mathbf{f}_\mathtt{I}} \quad E_V(f):=\frac{1}{n} \trace(\mathbf{f}^\top L_V(f) \mathbf{f}).
\end{align}
Different from the spherical subproblem, since the boundary vertices are fixed, the volume of the enclosed region of the boundary surface is determined. Hence, the conditions in \eqref{set:Sf} are satisfied. By the \Cref{thm:Sf_discrete}, the solution to the unconstrained problem \eqref{opt:ball2} is the target $\varepsilon$-mass-preserving map.

Partition $L_V(f)$ according to the boundary index $\mathtt{B}$ and interior index $\mathtt{I}$
\begin{align}
    L_V(f) = \begin{bmatrix}
        [L_V(f)]_{\mathtt{B}\mathtt{B}} & [L_V(f)]_{\mathtt{B}\mathtt{I}}\\
        [L_V(f)]_{\mathtt{I}\mathtt{B}} &
        [L_V(f)]_{\mathtt{I}\mathtt{I}}
    \end{bmatrix}.
\end{align}
From the \Cref{thm:gradient}, the subproblem \eqref{opt:ball2} reaches the minimum if $[L_V(f)\mathbf{f}]_\mathtt{I} = 0$, that is,
\begin{align} \label{eq:le_ball}
    [L_V(f)]_{\mathtt{I}\mathtt{I}} \mathbf{f}_{\mathtt{I}} = -[L_V(f)]_{\mathtt{I}\mathtt{B}} \mathbf{f}_{\mathtt{B}}.
\end{align}
This motivate us to solve the linear system \eqref{eq:le_ball} to compute the interior vertices $\mathbf{f}_\mathtt{I}^{(i)}$ by the Laplacian matrix $L_V(f^{(i-1)})$ at the $i$-th iterative step. We summarize this strategy in \Cref{alg:BMP}.

\begin{algorithm}[h]
\caption{$n$-VSEM for the $n$-ball mass-preserving parameterization}
\begin{algorithmic}[1] \label{alg:BMP}
    \REQUIRE $n$-simplicial complex $\mathcal{M}$ topologically equivalent to $\mathbb{B}^{n}$ with the measure $\mu$, tolerance $tol$.
    \ENSURE An $n$-ball mass-preserving parameterization $f: \mathcal{M}\to \mathbb{B}^{n}$ induced by $\mathbf{f}$.
    \STATE Let $\mathtt{B} = \{ i~|~ v_i \in \mathbb{S}_0(\partial \mathcal{M})\}$, and $\mathtt{I} = \{ i~|~ v_i \in \mathbb{S}_0(\mathcal{M}) \setminus \mathbb{S}_0(\partial \mathcal{M})\}$. Set $m = \# \mathtt{B}$.
    \STATE Compute the left singular matrix $U$ of $\tilde{V}_\mathtt{B} = (I - \frac{1}{m}\mathbf{1}\mathbf{1}^\top) V_\mathtt{B}$ and set $V_\mathtt{B} \gets U$.
    \STATE Compute a spherical mass-preserving map $g:\partial \mathcal{M} \to \mathcal{S}^{n-1}$ with a measure $\mu'$ by \Cref{alg:SMP}.
    \STATE Construct the Laplacian matrix $L \gets L_D$ as \eqref{eq:LD} for the $n$-manifold $\mathcal{M}$.
    \STATE Fix $\mathbf{f}_\mathtt{B} = \mathbf{g}$ and Compute the interior vertices by solving the linear system
    $
        L_{\mathtt{I}\mathtt{I}} \mathbf{f}_{\mathtt{I}} = -L_{\mathtt{I}\mathtt{B}} \mathbf{f}_{\mathtt{B}}.
    $
    \STATE Let $E_\text{old} = E_V(f)$ with $f$ induced by $\mathbf{f}$ and $\delta E = +\infty$.
    \WHILE{$\delta E > tol$}
    \STATE Update $L \gets L_V(f)$ as in \eqref{eq:LVf}.
    \STATE Update $\mathbf{f}_\mathtt{I}$ by solving the linear system
    $
        L_{\mathtt{I}\mathtt{I}} \mathbf{f}_{\mathtt{I}} = -L_{\mathtt{I}\mathtt{B}} \mathbf{f}_{\mathtt{B}}.
    $
    \STATE Let $E_\text{new} \gets E_V(f)$ and $\delta E = E_\text{old} - E_\text{new}$. Set $E_\text{old} \gets E_\text{new}$.
    \ENDWHILE
\end{algorithmic}
\end{algorithm}

\section{Numerical Experiments}

In this section, we indicate the numerical performance of our proposed algorithms for the $n$-manifold $\mathcal{M}$ and the $(n-1)$-manifold. We mainly focus on the $3$-dimensional and $4$-dimensional cases. The $3$-dimensional surface benchmarks are taken from AIM@SHAPE shape repository (\url{http://visionair.ge.imati.cnr.it/ontologies/shapes/}),
the Stanford 3D scanning repository (\url{http://graphics.stanford.edu/data/3Dscanrep}),
and TurboSquid (\url{https://www.turbosquid.com/}). The $3$-dimensional solid benchmarks are generated with the above surfaces by the builtin function \emph{generateMesh} of MATLAB.
The $4$-dimensional benchmarks are manually generated. All experimental programs are executed in MATLAB R2021a on a personal computer with a 2.50 GHz CPU and 64 GB RAM. We consider the volume-preserving map if no special illustration. In other words, the measures of the benchmarks are $\mu(\cdot) = |\cdot|$.

\subsection{Ellipsoid case}
First, we give $3$- and $4$-ellipsoids as a special case for the $n$-VSEM. Let $E^n(\mathbf{a})$ be a $n$-ellipsoid with $\mathbf{a}_i$ being the length of the $i$-th axis. We can find that the map $f^*:(v^1,v^2,\cdots,v^n) \to \left( \frac{v_1}{\mathbf{a}_1}, \frac{v_2}{\mathbf{a}_2}, \cdots, \frac{v_n}{\mathbf{a}_n} \right)$ is a volume-preserving map. We aim to compute the this volume-preserving map $f^*$ to confirm the mass-preservation of $n$-VSEM. It is worth noting that the boundary map $f^*(\partial E^n(\mathbf{a})) = \mathcal{S}^{n-1}$ is not a volume-preserving map. The ratio of the volume $|f(\tau)|/|\tau|$ with $\tau = [v_0,v_1,\cdots,v_{n-1}]$ satisfies
\begin{align}
    \frac{|f^*(\tau)|}{|\tau|} = \frac{|\mathbf{f}_{10}^* \wedge \mathbf{f}_{20}^* \wedge \cdots \wedge \mathbf{f}_{n0}^*|}{|v_{10} \wedge v_{20} \wedge \cdots \wedge v_{n0}|} = \frac{\sqrt{\sum_{i = 1}^{n-1} \det([ \mathbf{f}_{(i+1)0}^{*\top}, \cdots, \mathbf{f}_{(n-1)0}^{*\top}, \mathbf{f}_{10}^{*\top}, \cdots, \mathbf{f}_{(i-1)0}^{*\top} ])}}{\sqrt{\sum_{i = 1}^{n-1} \det([ v_{(i+1)0}^\top, \cdots, v_{(n-1)0}^\top, v_{10}^\top, \cdots, v_{(i-1)0}^\top ])}}.
\end{align}
Hence, we choose $\mu'(\tau) = \frac{|\tau|}{|f^*(\tau)|}$ and $C' = \sum_{\tau \in \mathbb{S}_{n-1}(\partial \mathcal{M})} |f^*(\tau)|$ for the boundary map in the \Cref{alg:SMP}. The initial map is the exact solution with a perturbation $10^{-4}$. Then we compute the map $f$ by the \Cref{alg:BMP} with the computed boundary map. Since the normalization for the ball-like shape in \Cref{alg:BMP} directly leads to the exact solution, we do not execute this normalization process to show the effect of the proposed algorithms.
The \Cref{alg:SMP} converges within $20$ iterative steps and \Cref{alg:BMP} converges  within $3$ iterative steps.
\Cref{tab:ellip} shows the $3$ and $4$ dimensional ellipsoids for the experiment and the mass-preserving performances of \Cref{alg:SMP} and \Cref{alg:BMP}, where $\# \mathtt{B}$ and $\# \mathtt{I}$ are the the number of the boundary and vertices vertices, respectively. And the definitions of $\varepsilon$ and $\delta$ are in \eqref{def:epsilon_discrette} and \eqref{def:delta}, respectively. We can see that the measurements $\varepsilon$ and $\delta$ are closed to $10^{-16}$, which indicates that our proposed algorithms compute the mass-preserving parameterization numerically.

\begin{table}[h]
    \centering
    \begin{tabular}{c|c|cc|ccc|ccc}
    \hline
        \multirow{2}{*}{$n$} & \multirow{2}{*}{$\mathbf{a}$} & \multirow{2}{*}{$\# \mathtt{B}$} & \multirow{2}{*}{$\# \mathtt{I}$} & \multicolumn{3}{c|}{Sphere} & \multicolumn{3}{c}{Ball} \\ \cline{5-10}
         & && & $\varepsilon$ & $\text{mean}(\delta)$ & $\text{SD}(\delta)$ & $\varepsilon$ & $\text{mean}(\delta)$ & $\text{SD}(\delta)$\\
    \hline
        \multirow{2}{*}{$3$} & $(0.8,1,1.2)$ & 6707 & 48811 & 3.5e-15 & 4.2e-11 & 1.6e-8 & 2.4e-14 & 6.6e-11 & 7.8e-8 \\
        & $(0.5,1,1.5)$ & 7210 & 42489 & 2.3e-14 & 1.6e-11 & 9.5e-9 & 2.3e-14 & 3.8e-11 & 6.8e-8 \\
    \hline
        \multirow{2}{*}{$4$} & $(0.7,0.9,1.1,1.3)$ & 4003 & 40030 & 5.4e-15 & 2.5e-12 & 1.7e-10 & 5.7e-14 & 1.9e-12 & 7.69e-10 \\
        & $(0.5,0.8,1.1,1.4)$ & 5121 & 45000 & 4.6e-14 & 7.1e-12 & 1.6e-9 & 1.3e-12 & 1.0e-11 & 1.26e-8\\
    \hline
    \end{tabular}
    \caption{The meshes of $3$ and $4$ dimensional ellipsoids with the axis lengths and the number of boundary and interior vertices. The columns Sphere and Ball are the measurements of the resulting maps of \Cref{alg:SMP} for the boundary map and \Cref{alg:BMP} for the whole map.}
    \label{tab:ellip}
\end{table}

\subsection{General cases}
Now we focus on the general cases, which have not exact volume-preserving solutions. Hence, we compute the $\varepsilon$-volume-preserving parameterization with as small $\varepsilon$ as possible. \Cref{fig:mesh} shows the $3$ dimensional benchmarks and the projected surfaces along a direction of the $4$ dimensional benchmarks. \Cref{tab:generalmesh} demonstrates the basic information and the results of the proposed algorithms on the $3$ and $4$ dimensional benchmarks, where $n$ is the dimension of the manifolds. We set $|\partial \mathcal{M}| = |\mathcal{S}^{n-1}|$ and $|\mathcal{M}| = |\mathbb{B}^{n}|$ in the measurement $\varepsilon$ for the better presentation and comparison.

\begin{figure}[thp]
    \centering
\resizebox{\textwidth}{!}{
\begin{tabular}{c@{}c@{}c@{}c@{}}
        \includegraphics[clip,trim = {15cm 2cm 15cm 2cm},width = 0.24\textwidth]{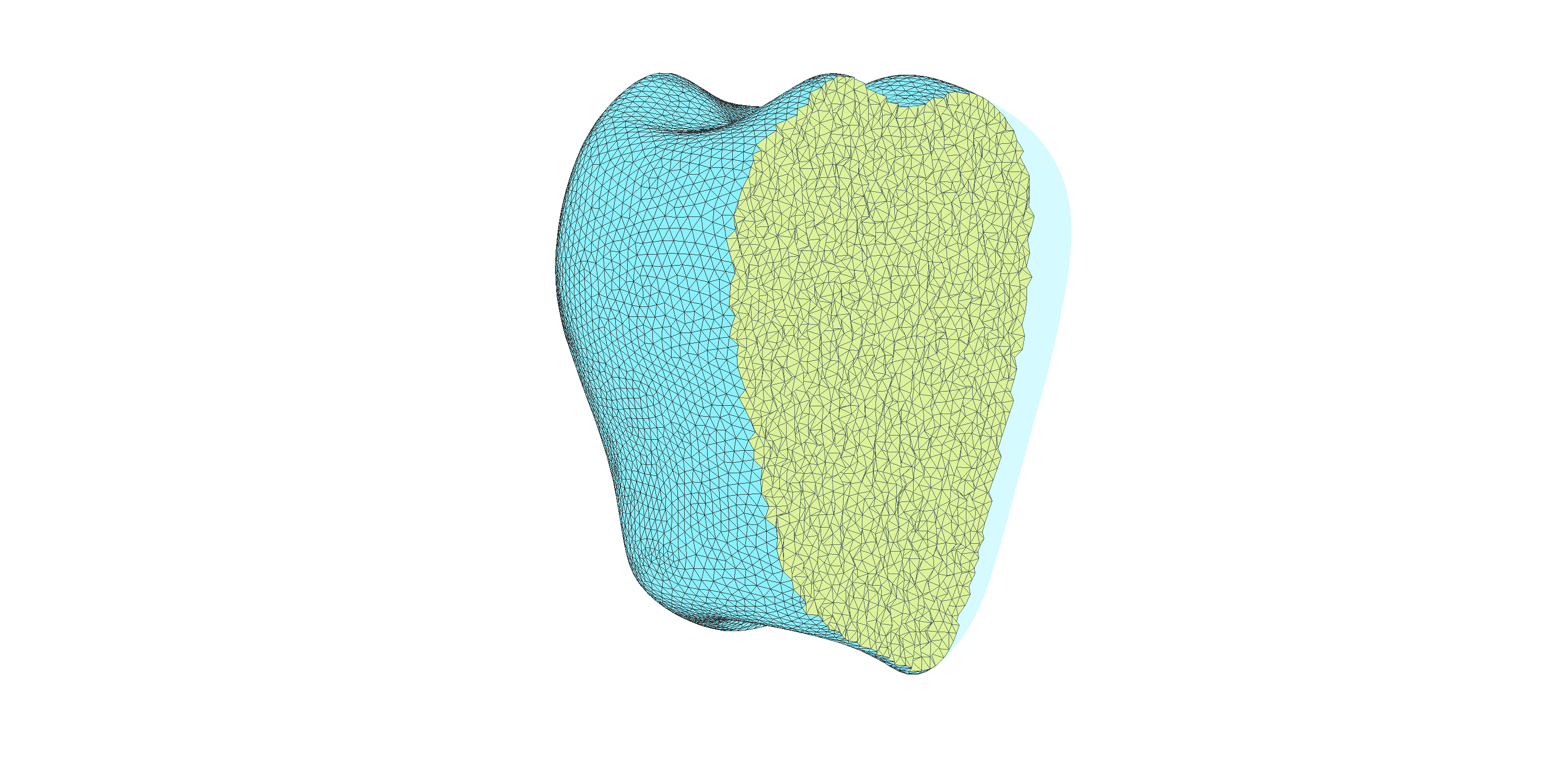} &
        \includegraphics[clip,trim = {15cm 2cm 15cm 2cm},width = 0.24\textwidth]{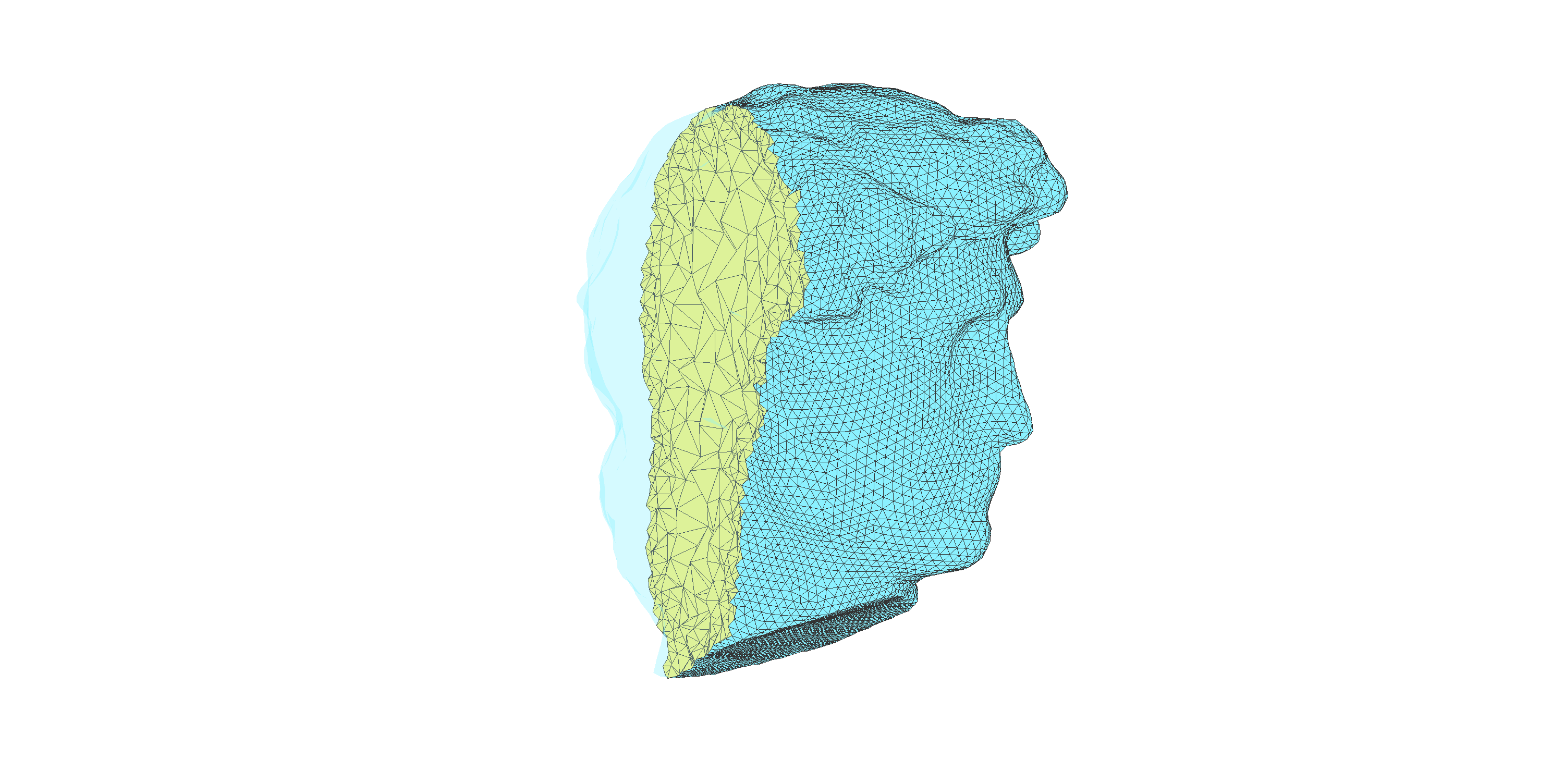} &
        \includegraphics[clip,trim = {17cm 5cm 17cm 3.5cm},width = 0.24\textwidth]{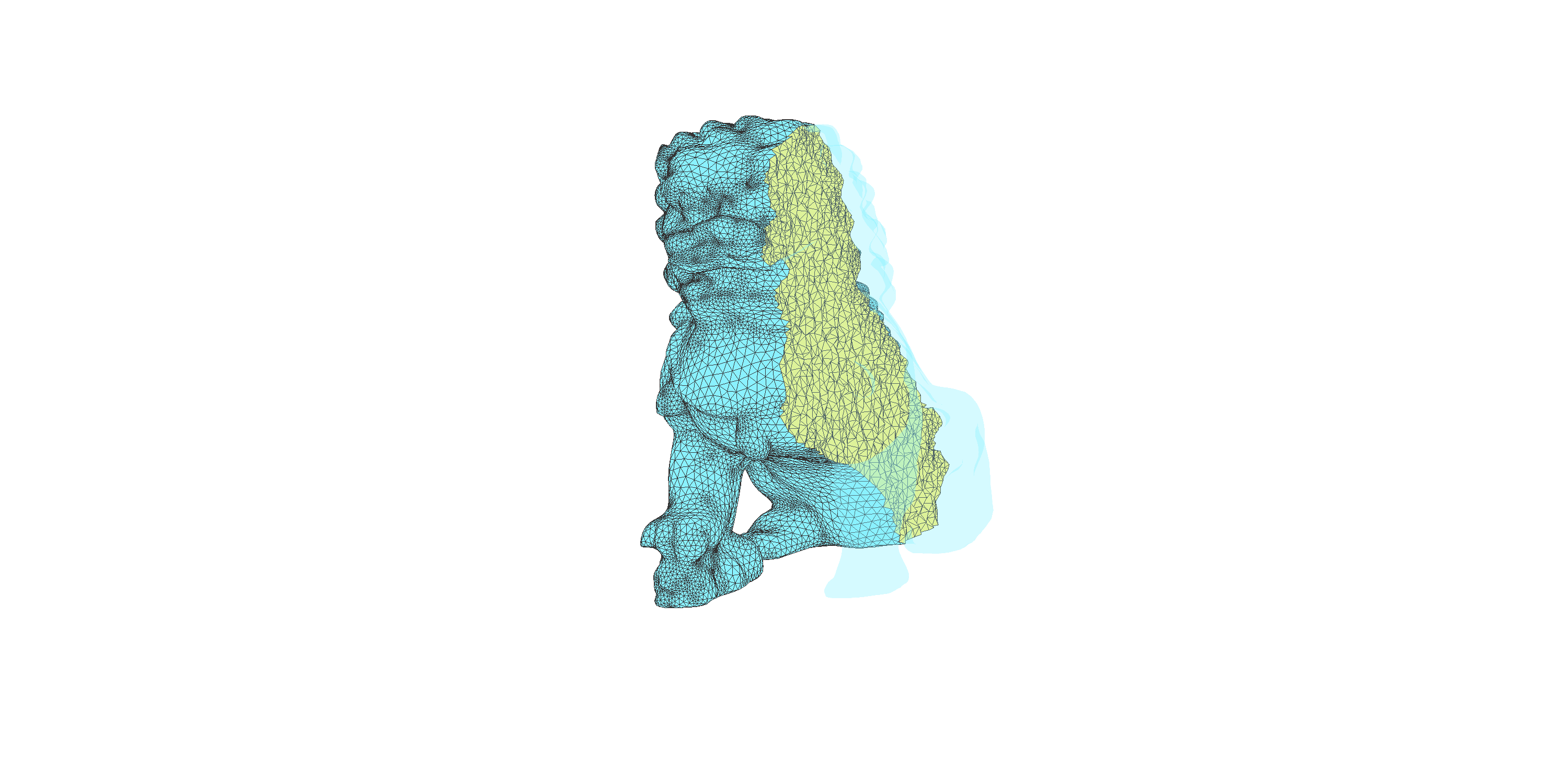} &
        \includegraphics[clip,trim = {15cm 2cm 15cm 2cm},width = 0.24\textwidth]{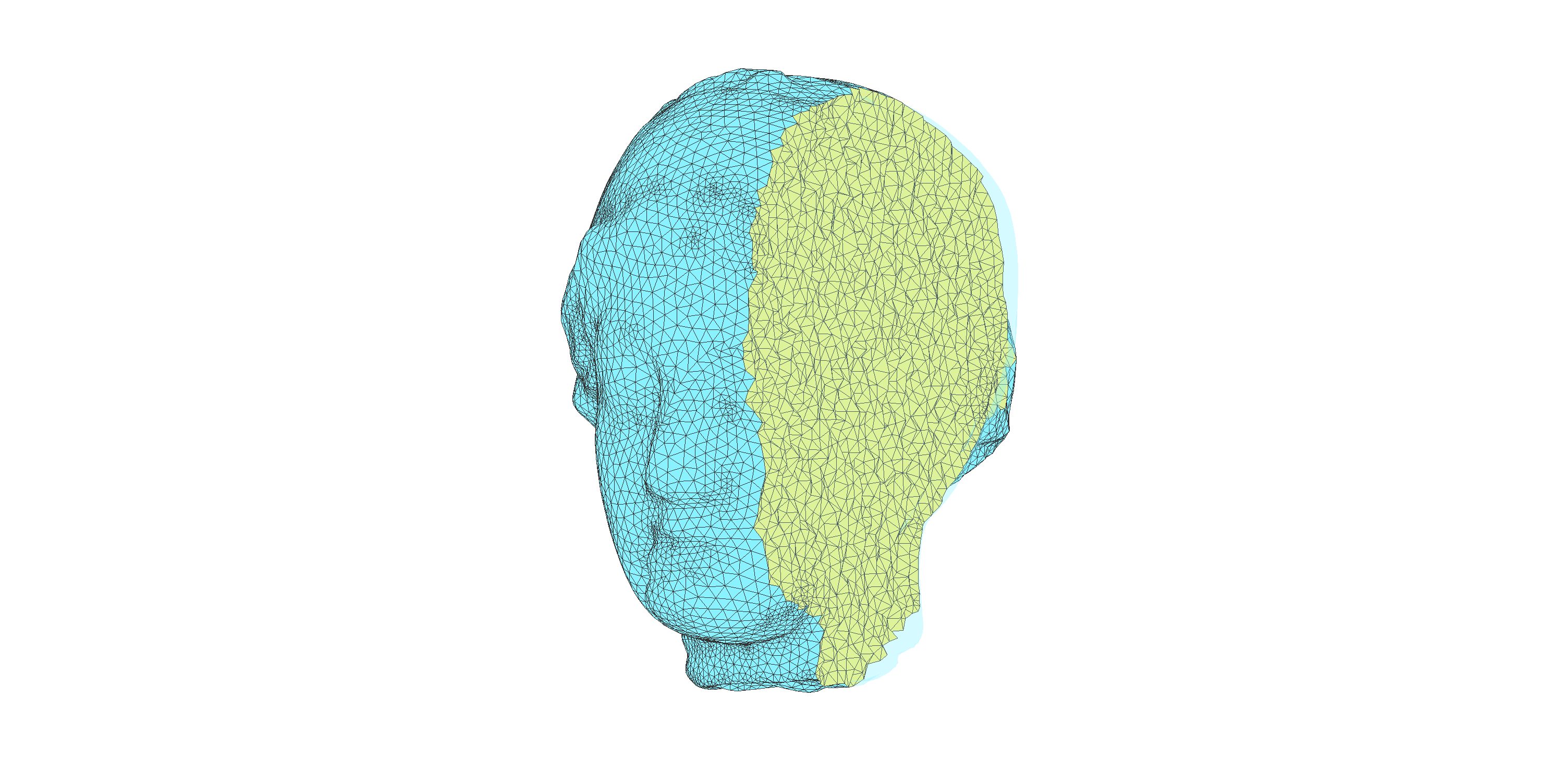} \\
        Apple & DavidHead & LionStatue & VenusHead\\
        &
        \includegraphics[clip,trim = {13cm 2cm 11cm 4cm},width = 0.24\textwidth]{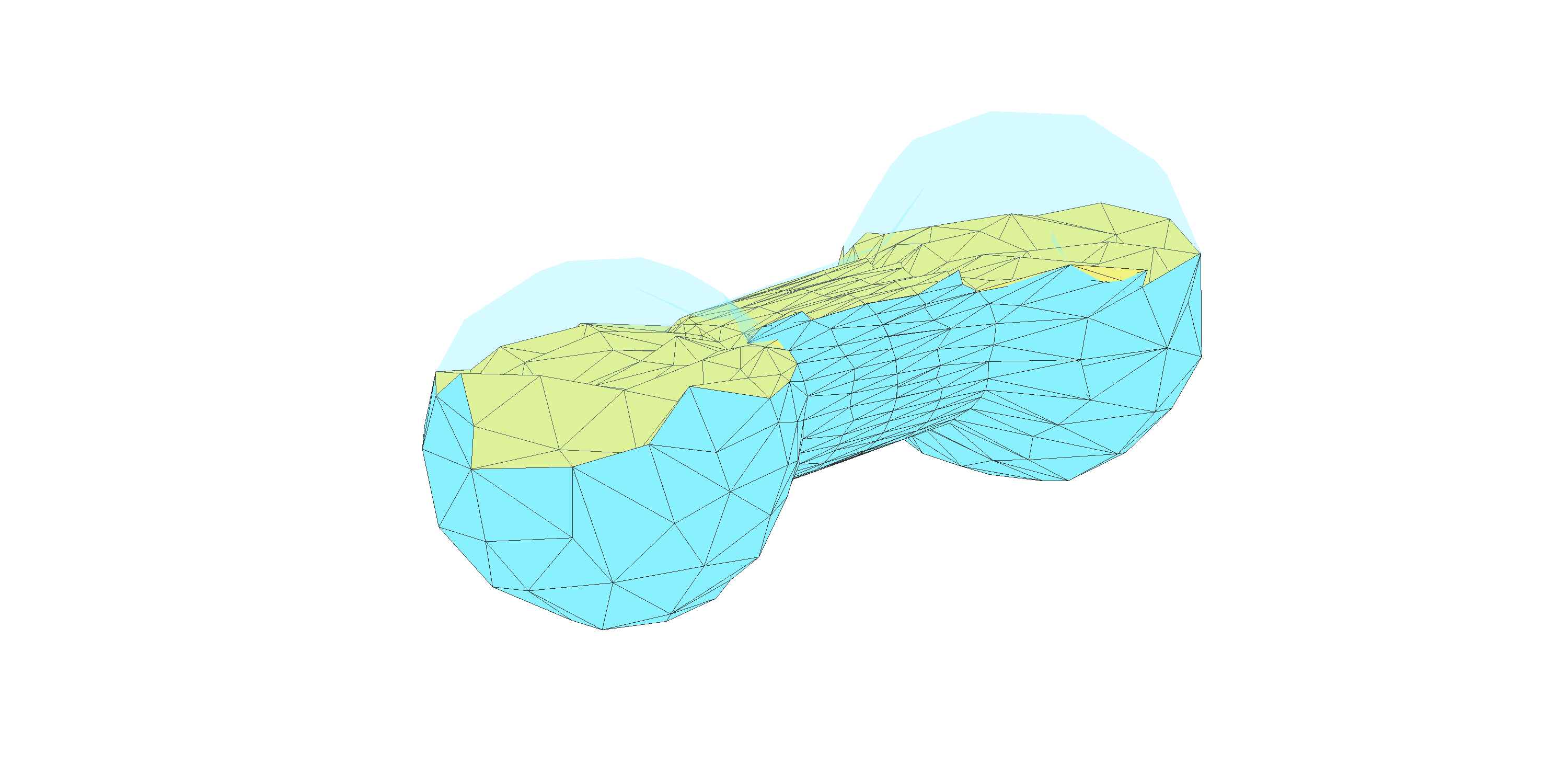} &
        \includegraphics[clip,trim = {15cm 2cm 15cm 2cm},width = 0.24\textwidth]{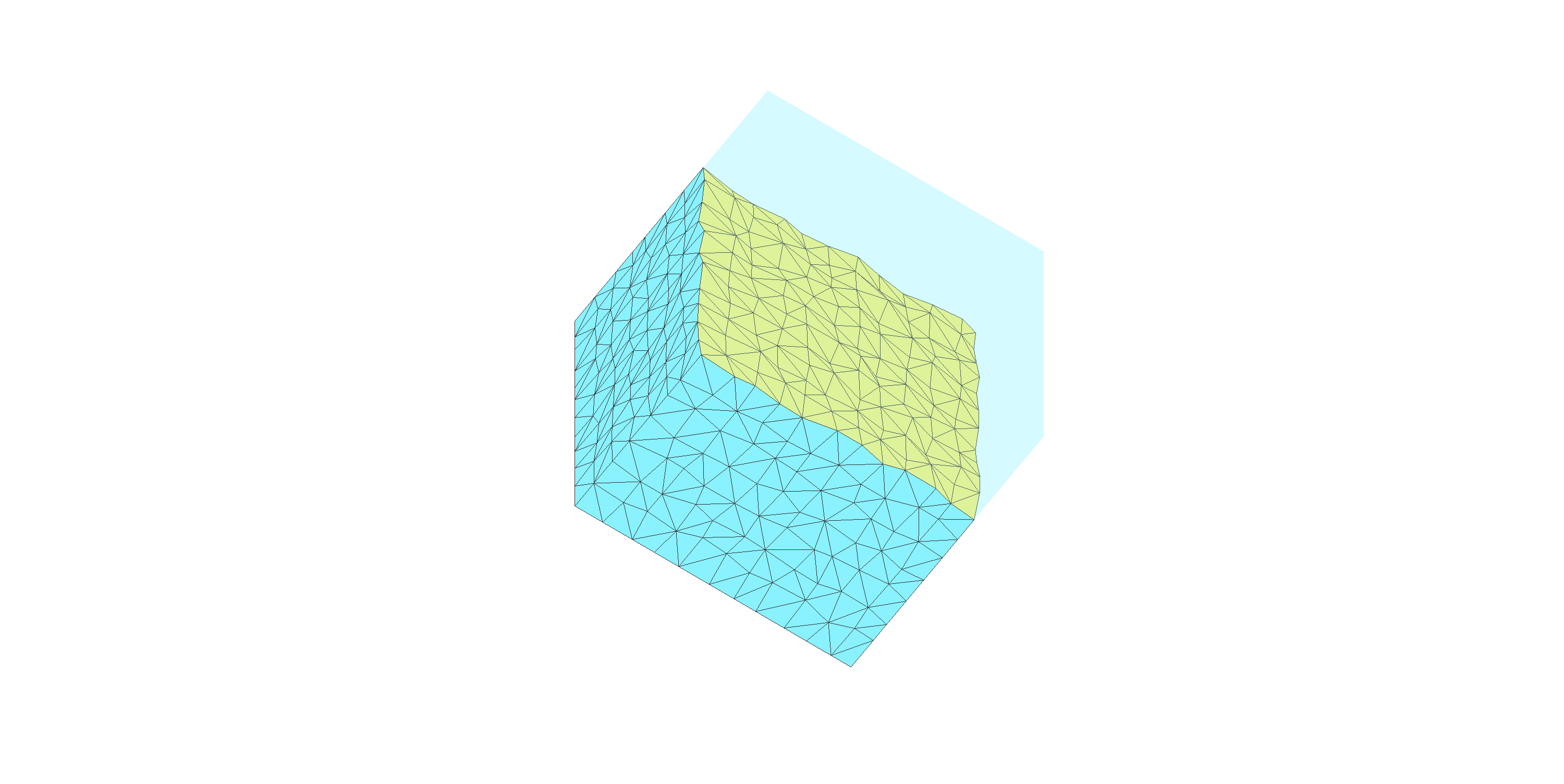} & \\
        & Dumbbell & Tesseract & \\
\end{tabular}
}
\caption{The $3$ and $4$ dimensional benchmarks for the experiments. The first row is $3$ dimensional benchmarks and the second row is the projection along a direction of $4$ dimensional benchmarks.}
    \label{fig:Histogram}
\end{figure}

\begin{table}[h]
    \centering
    \begin{tabular}{c|c|cc|ccc|ccc}
    \hline
        \multirow{2}{*}{$n$} & \multirow{2}{*}{Mesh} & \multirow{2}{*}{$\# \mathtt{B}$} & \multirow{2}{*}{$\# \mathtt{I}$} & \multicolumn{3}{c|}{Sphere} & \multicolumn{3}{c}{Ball} \\ \cline{5-10}
         & && & $\varepsilon$ & $\text{mean}(\delta)$ & $\text{SD}(\delta)$ & $\varepsilon$ & $\text{mean}(\delta)$ & $\text{SD}(\delta)$\\
    \hline
        \multirow{4}{*}{$3$} & Apple & 7947 & 54185 & 1.2e-6 & 1.2e-6 & 3.8e-4 & 1.0e-4 & 4.2e-5 & 1.6e-2 \\
        & DavidHead  & 10671 & 9313 & 2.5e-6 & 3.4e-7 & 4.7e-4 & 2.4e-2 & 8.4e-3 & 1.2e-1 \\
        & LionStatue & 23057 & 50080 & 1.9e-4 & 6.8e-5 & 5.8e-3 & 3.2e-2 & 7.1e-3 & 1.2e-1 \\
        & VenusHead & 9129 & 36105 & 2.8e-6 & 1.9e-6 & 9.5e-4 & 3.9e-3 & 1.1e-3 & 4.4e-2 \\
    \hline
        \multirow{2}{*}{$4$} & Dumbbell & 5043 & 45387 & 1.3e-1 & 3.4e-3 & 1.3e-1 &  &  &  \\
        & Tesseract & 10736 & 10000 & 8.1e-2 & 5.9e-4 & 6.7e-2 & 3.0e-1 & 2.1e-3 & 2.4e-1\\
    \hline
    \end{tabular}
    \caption{The $3$ and $4$ dimensional benchmarks with the number of boundary and interior vertices. The columns Sphere and Ball are the measurements of the resulting maps of \Cref{alg:SMP} for the boundary maps and \Cref{alg:BMP} for the whole maps.}
    \label{tab:generalmesh}
\end{table}

\begin{figure}[thp]
    \centering
\resizebox{\textwidth}{!}{
\begin{tabular}{c@{}c@{}c@{}c@{}l}
        \includegraphics[width = 0.24\textwidth]{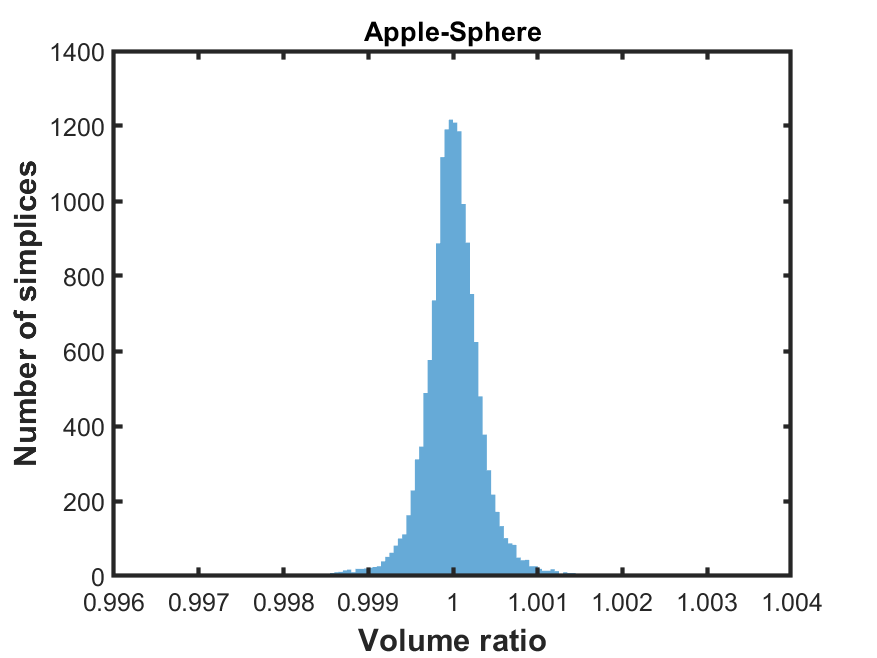} &
        \includegraphics[width = 0.24\textwidth]{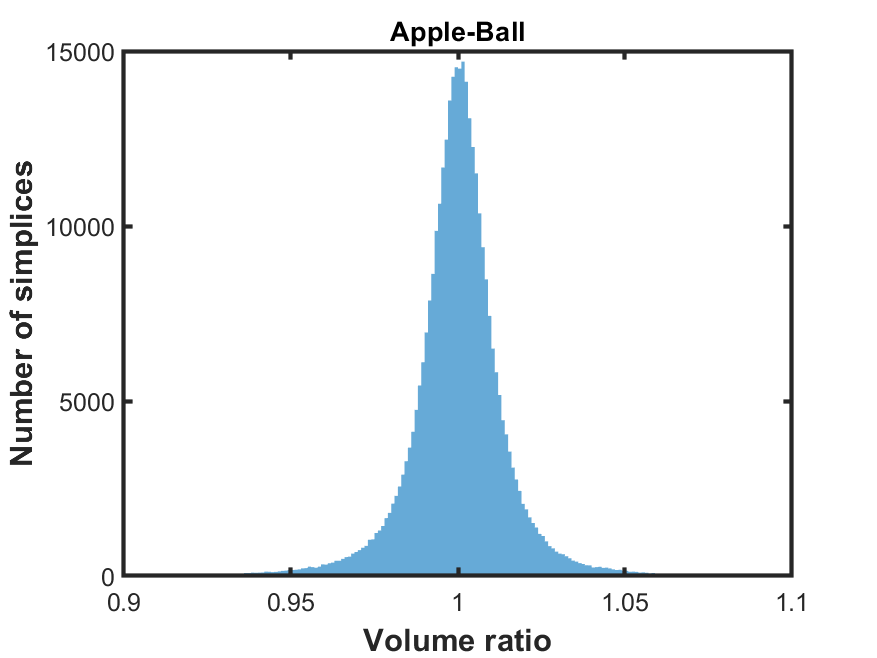} &
        \includegraphics[width = 0.24\textwidth]{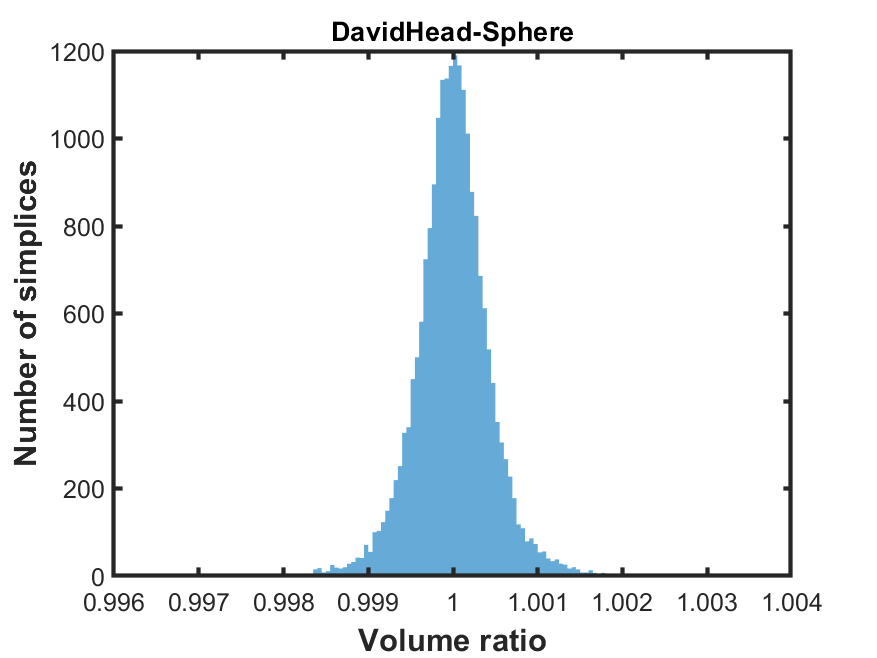} &
        \includegraphics[width = 0.24\textwidth]{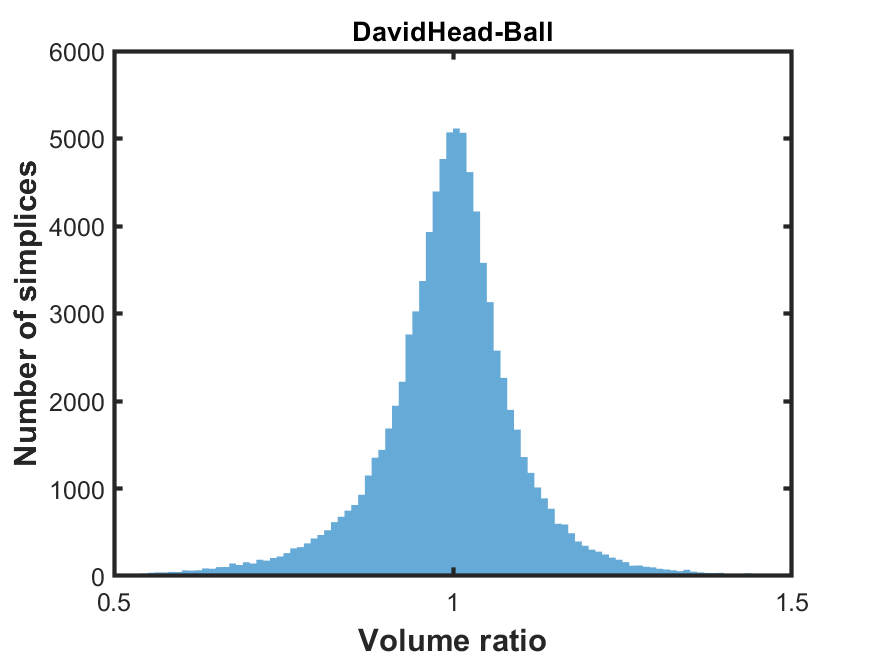} \\
        \includegraphics[width = 0.24\textwidth]{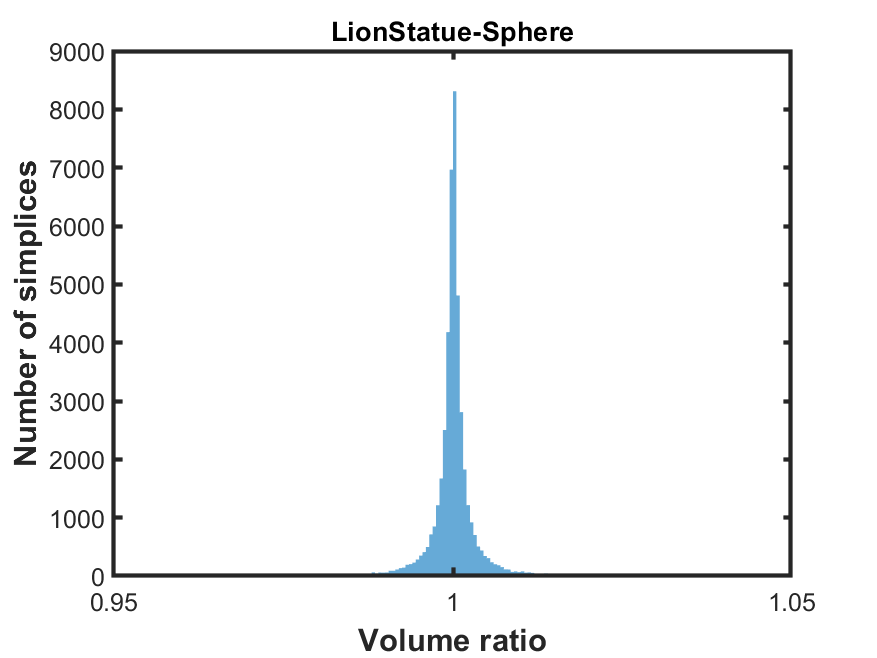} &
        \includegraphics[width = 0.24\textwidth]{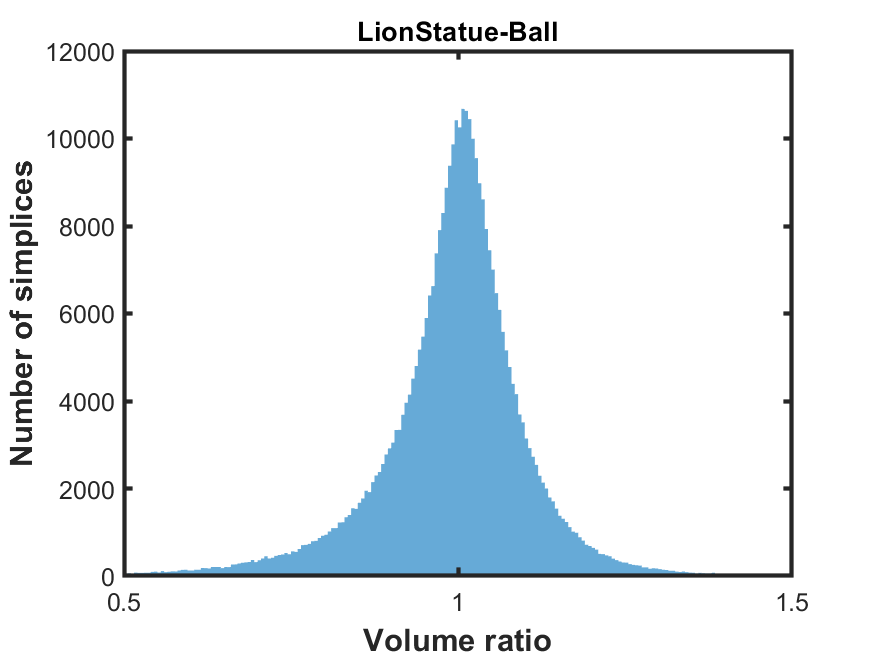} &
        \includegraphics[width = 0.24\textwidth]{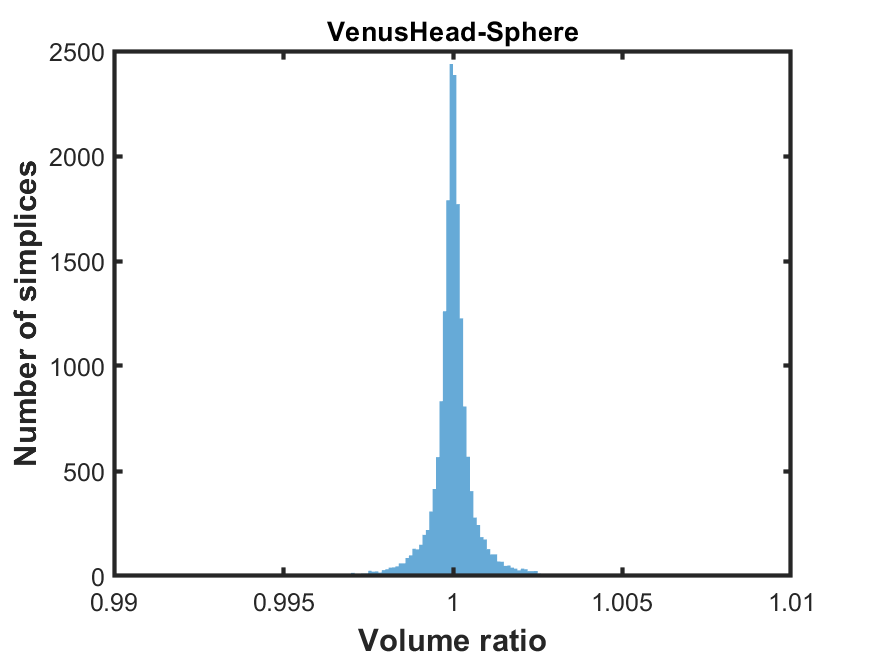} &
        \includegraphics[width = 0.24\textwidth]{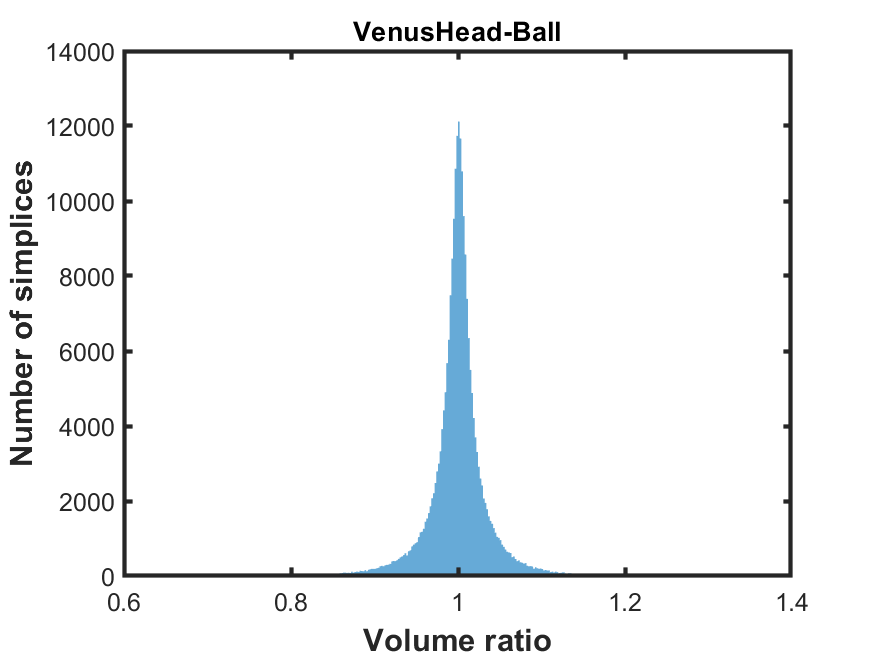}\\
        \includegraphics[width = 0.24\textwidth]{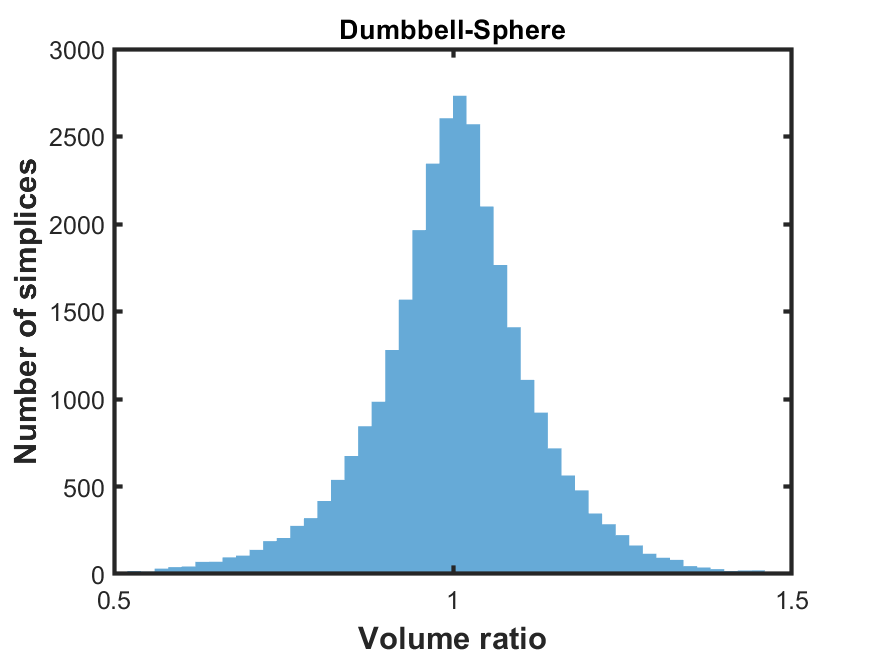} &
        &
        \includegraphics[width = 0.24\textwidth]{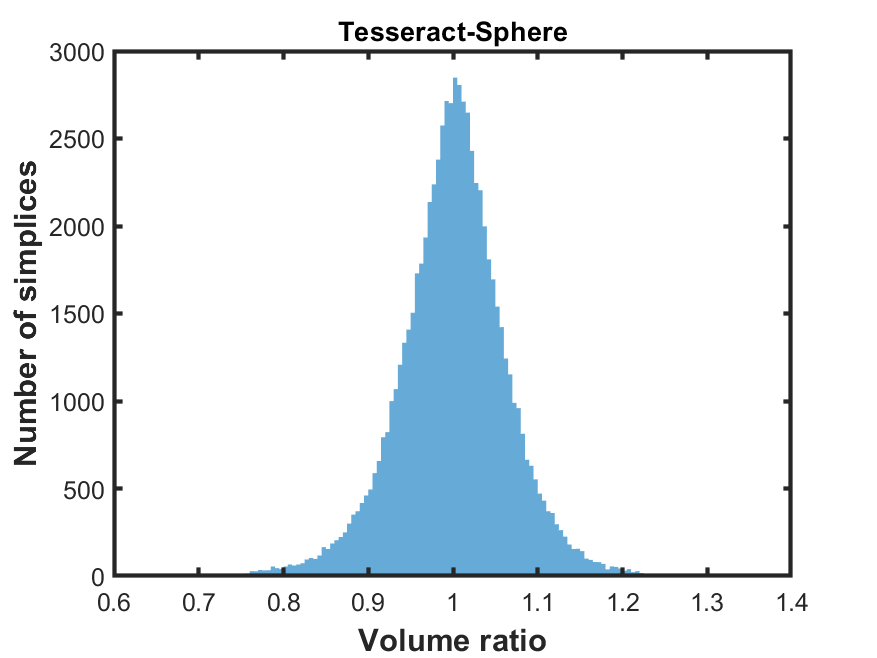} &
        \includegraphics[width = 0.24\textwidth]{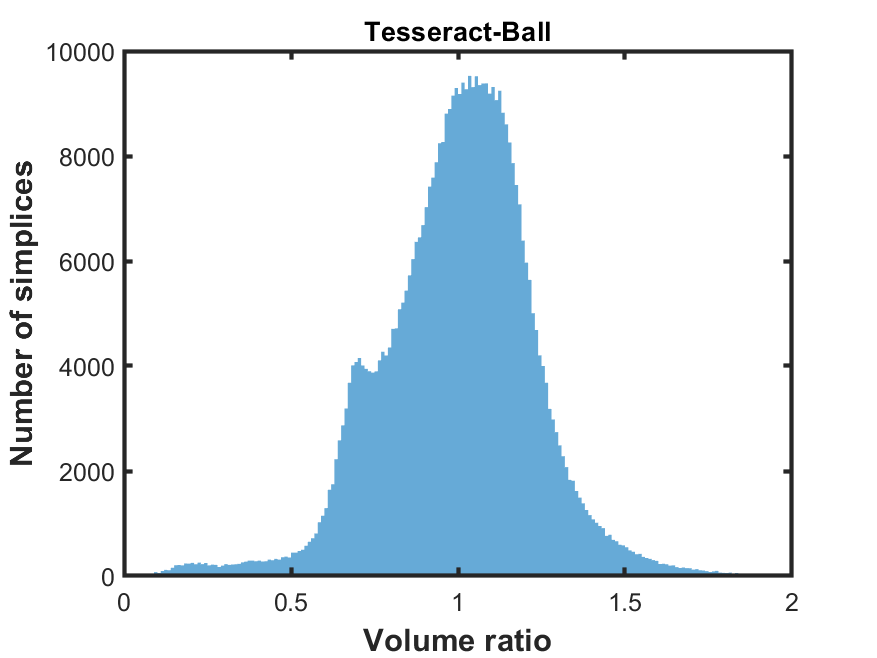}
\end{tabular}
}
\caption{Histograms of volume ratios $\delta + 1$ on the benchmarks.}
    \label{fig:mesh}
\end{figure}

From the \Cref{tab:generalmesh}, we can see that the proposed spherical and ball algorithms have well performances on the volume-preservation among all the benchmarks. For the boundary problem, the measurements $\varepsilon$ are on the order $10^{-6}$ to $10^{-4}$ and the local relative error delta are from $10^{-7}$ to $10^{-5}$. For the interior problem, the measurements $\varepsilon$ are on the order $10^{-4}$ to $10^{-2}$ and the local relative error delta are $10^{-5}$ to $10^{-3}$. For the 4 dimensional benchmarks, the measurements are slightly larger. Improving the accuracy of higher dimensional parameterizations is still worthy and significant work in the future. Additionally, the histograms in \Cref{fig:Histogram} specifically show the distributions of the local volume ratios $\delta_i+1=\frac{\nicefrac{|f(\sigma_i)|}{\sum_{\sigma \in \mathbb{S}_n(\mathcal{M})} |f(\sigma_i)|}}{\nicefrac{\mu(\sigma_i)}{\sum_{\sigma \in \mathbb{S}_n(\mathcal{M})} \mu(\sigma)}}$. We can see that the ratios concentrate in 1 tightly, indicating the high volume-preservation of our proposed algorithms.

\section{Applications}

\subsection{Registration}

\subsection{Deformation}

\section{Conclusion}

In this paper, we propose the $n$-VSE functional to achieve the mass-preserving parameterization for the $n$-manifold topologically equivalent to the $n$-ball. The $n$-VSE can be treated as the sum of the mass ratios among the manifold. We give a lower bound of the $n$-VSE and proof that the $n$-VSE reaches the lower bound if and only if the map is mass-preserving. This is the foundation of our proposed algorithms that aim to minimize the $n$-VSE. In the discrete case, the $n$-VSE is equivalent to the energies defined in \cite{MHWW19,MHTL19,TMWH23} when $n = 2$ and $n = 3$. The discrete $n$-VSE can be formulated as the quadratic form with respect to the Laplacian matrix and the gradient of the discrete $n$-VSE is of the simple formulation, that is, the product of the Laplacian matrix and the image vertices. This property makes the $n$-VSEM more practical. To compute the mass-preserving parameterization of the $n$-manifold, we solve the boundary and interior subproblems by developing the spherical and the interior algorithms, respectively. The numerical experiment shows the high accuracy of our developed algorithms, especially for the $3$-manifolds.

However, the efficiency and accuracy, especially for the higher dimensional manifolds, are not fully satisfactory. The high dimensional manifolds need more vertices and simplices to represent with high resolution, which makes the scale of the problem more huge. It is still a challenging problem.

\bibliographystyle{plain}


\end{document}